\documentclass{article}
\usepackage{graphicx}
\usepackage{cite}
\IfFileExists{authblk.sty}{\usepackage{authblk}}{}
\usepackage{amsthm}
\usepackage{amsmath}
\usepackage{amssymb}
\usepackage[left=3cm, right=3cm, top=1.0in, bottom=1.0in]{geometry}
\usepackage[colorlinks=true]{hyperref}

\date{}
\title{Strict Convexity for Solution of Liouville-Type Dirichlet Problems}
\author{Jiahuan Li, Shuning Xu}

\newcommand{\keywords}[1]{\par\quad\textbf{Keywords:} #1}

\newcommand{\R}{\mathbb R}
\newcommand{\C}{\mathbb C}
\newcommand{\tr}{\operatorname{tr}}
\newcommand{\rank}{\operatorname{rank}}
\newcommand{\arcosh}{\operatorname{arcosh}}
\newcommand{\sech}{\operatorname{sech}}

\begin{document}
\maketitle

\newtheorem{theorem}{Theorem}[section]
\newtheorem{definition}[theorem]{Definition}
\newtheorem{lemma}[theorem]{Lemma}
\newtheorem{corollary}[theorem]{Corollary}
\newtheorem{example}[theorem]{Example}
\newtheorem{proposition}[theorem]{Proposition}
\newtheorem{conjecture}[theorem]{Conjecture}
\newtheorem{remark}[theorem]{Remark}

\begin{abstract}
We identify a common convexity structure for three exponential Dirichlet
problems on smooth uniformly strictly convex domains: the Liouville equation
$\Delta u=e^u$, the real equation $\sigma_2(D^2u)=e^{2u}$, and its complex
counterpart $\sigma_2(u_{i\bar j})=e^{2u}$. In each case $u<0$ in the domain
and $u=0$ on the boundary. We prove that
\[
w=-\operatorname{arcosh}(e^{-u/2})
\]
is strictly convex in the underlying real variables. The argument combines
domain deformation, constant-rank theory, inverse-convexity estimates, radial
ball models, boundary strict convexity, and local $C^2$ stability. 
\end{abstract}

\keywords{Power concavity; Liouville equation; Hessian equations; complex Hessian equations; constant-rank theorem; convexity of solutions.}

\section{Introduction}

Convexity properties of solutions to elliptic equations connect nonlinear
analysis with convex geometry. A classical example is the theorem of
Brascamp and Lieb \cite{BrascampLieb1976}, which yields log-concavity of the
first Dirichlet eigenfunction of the Laplacian on a convex domain. Such
concavity properties are closely related to Brunn--Minkowski inequalities for
eigenvalues and other variational quantities; see
\cite{Colesanti2005,Jerison1996,Salani2005,Schneider1993}.

The study of convexity properties of solutions of partial differential
equations predates the systematic power-concavity theory of the 1980s.
In 1971, Makar-Limanov proved that the square root of the torsion
function is concave on planar convex domains \cite{ML71}. In 1976,
Brascamp and Lieb showed that log-concavity is preserved by the heat
flow and, in particular, obtained the log-concavity of the first
Dirichlet eigenfunction on convex domains
\cite{BrascampLieb1976}. In the 1980s, Korevaar developed the convexity
maximum principle, while Kennington formulated a systematic theory of
power concavity for semilinear Dirichlet problems
\cite{Korevaar1983,Ken85}. In particular, Kennington recorded a result
of Grant Keady showing that if
\[
\Delta u=e^u\quad\hbox{in }\Omega,
\qquad
u=k\quad\hbox{on }\partial\Omega,
\]
then $\sqrt{k-u}$ is concave \cite[Theorem~5.3]{Ken85}. Thus, when
$k=0$, this result already gives the concavity of $\sqrt{-u}$ for the
Liouville problem considered below. The present arcosh transform yields
a stronger strict-convexity statement and, consequently, strengthens
this classical square-root concavity to strict concavity.

For strict convexity, a foundational step was the work of Caffarelli
and Friedman, who combined a deformation argument with a
two-dimensional constant-rank theorem for convex solutions of
semilinear elliptic equations \cite{CaffarelliFriedman1985}. Their work
is the direct methodological precursor of the
deformation--constant-rank strategy used in the present paper.
Korevaar and Lewis subsequently extended the constant-rank theorem to
higher dimensions \cite{KorevaarLewis1987}.At the macroscopic level, Alvarez, Lasry, and Lions introduced an
inverse-convexity condition involving the map
$A\mapsto F(A^{-1})$ and used it to establish convexity of viscosity solutions under state-constraint boundary conditions
\cite{AlvarezLasryLions1997}. This inverse-matrix structure  later became an important ingredient in the fully nonlinear constant-rank theory. In the fully nonlinear setting, Guan and Ma developed the constant-rank method for Hessian
equations in their study of the Christoffel--Minkowski problem
\cite{GuanMa2003}, and Caffarelli, Guan, and Ma established a general
constant-rank theorem for fully nonlinear elliptic equations
\cite{CaffarelliGuanMa2007}. Bian and Guan later formulated the
microscopic convexity principle under a general structural condition
\cite{BianGuan2009,BianGuan2010}.

The first equation considered here is the Liouville-type problem
\begin{equation}\label{eq:laplace-main}
\Delta u=e^u,
\qquad u<0\quad\hbox{in }\Omega,
\qquad u=0\quad\hbox{on }\partial\Omega.
\end{equation}
In dimension two, this equation has a direct conformal interpretation. If
$g=e^u g_{\mathrm{Eucl}}$, then its Gaussian curvature is
\[
K_g=-\frac12 e^{-u}\Delta u;
\]
hence \eqref{eq:laplace-main} prescribes the constant curvature
$K_g=-1/2$. The boundary condition fixes the scale of the conformal factor.
Our result shows that this negative-curvature equation carries a hidden
convex geometry after an equation-adapted change of variables.

We next consider Hessian operators. For $1\leq k\leq n$ and
$u\in C^2$, the $k$-Hessian operator is
\[
\sigma_k(D^2u)=\sum_{1\leq i_1<\cdots<i_k\leq n}
\lambda_{i_1}(D^2u)\cdots\lambda_{i_k}(D^2u).
\]
For $1<k\leq n$, the equation is fully nonlinear and elliptic only on the
appropriate G{\aa}rding cone. The Dirichlet theory was developed by
Caffarelli, Nirenberg, and Spruck \cite{CaffarelliNirenbergSpruck1985}, while
Wang studied the corresponding Hessian eigenvalue problem \cite{Wang1994}.
Convexity and Brunn--Minkowski results for Hessian equations were obtained in
\cite{MaXu2008,LiuMaXu2010,Salani2012}; higher-dimensional real and complex
$\sigma_2$ developments appear in \cite{limasa,chenlima}.

Complex Hessian equations have their own analytic subtleties, since
admissibility is imposed on the Hermitian Hessian while our conclusion is a
statement about the full real Hessian. Relevant developments in a priori
estimates, Dirichlet problems, Liouville theorems, and eigenvalue problems may
be found in
\cite{HouMaWu2010,DinewKolodziej2011,DinewKolodziej2012,CollinsPicard2022,BadianeZeriahi2023,ChuLiuMcCleerey2024}.

The original solution of a Hessian equation is generally not the convex
quantity. For example, the natural transforms for
\[
\sigma_2(D^2u)=\lambda(-u)^2
\quad\hbox{and}\quad
\sigma_2(D^2u)=1
\]
are, respectively, $-\log(-u)$ and $-(-u)^{1/2}$. These examples belong to
the power-concavity framework initiated by Kennington. The exponential
source is nonhomogeneous in that scale, however, and the power transform does
not remove its explicit dependence on the solution. The appropriate change
of variables is
\begin{equation}\label{eq:main-transform}
w=-\arcosh(e^{-u/2}),
\qquad
u=h(w):=-2\log(\cosh w),
\qquad w<0.
\end{equation}
It is characterized by the identity
\begin{equation}\label{eq:h-ode}
h''(w)=-2e^{h(w)},
\end{equation}
which absorbs the exponential source into the transformed equation. With
\[
s(w)=-\sinh w\cosh w>0,
\]
problem \eqref{eq:laplace-main} becomes
\begin{equation}\label{eq:laplace-transformed-intro}
s(w)\Delta w-|Dw|^2-\frac12=0.
\end{equation}
For the real exponential $\sigma_2$ equation
\begin{equation}\label{eq:real-main}
\sigma_2(D^2u)=e^{2u},
\qquad u<0\quad\hbox{in }\Omega,
\qquad u=0\quad\hbox{on }\partial\Omega,
\end{equation}
the transformed equation is
\begin{equation}\label{eq:real-transformed-intro}
s(w)^2\sigma_2(D^2w)
-s(w)\tr(P_{\R}(Dw)D^2w)-\frac14=0,
\qquad P_{\R}(p)=|p|^2I-p\otimes p.
\end{equation}
The same scalar transform applies to the complex equation
\begin{equation}\label{eq:complex-main}
\sigma_2(u_{i\bar j})=e^{2u},
\qquad u<0\quad\hbox{in }\Omega,
\qquad u=0\quad\hbox{on }\partial\Omega.
\end{equation}
The complex Hessian must then be viewed as a compression of the real Hessian;
this is why the complex result below concerns strict convexity in the
underlying real variables.

Our main results are as follows.

\begin{theorem}\label{thm:laplace-main}
Let $\Omega\subset\mathbb R^N$, $N\geq 2$, be a bounded, smooth,
uniformly strictly convex domain. Let
$u\in C^2(\Omega)\cap C^0(\overline{\Omega})$
be a solution of~\eqref{eq:laplace-main}.
Then 
$u\in C^\infty(\overline{\Omega}),$
and$-\operatorname{arcosh}(e^{-u/2})$is strictly convex in $\Omega$.
\end{theorem}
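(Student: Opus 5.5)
We follow the deformation--constant-rank strategy announced in the abstract, applied to the transformed equation \eqref{eq:laplace-transformed-intro}.

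\textbf{Regularity and uniqueness.} Since $e^u$ is increasing in $u$, the comparison principle gives uniqueness of solutions of \eqref{eq:laplace-main}. The sub/supersolution pair $\underline u=-C\,\mathrm{dist}$-type barrier and $\overline u=0$ yields $u\in C^{0,1}(\overline\Omega)$. Schauder theory, bootstrapped from the right-hand side $e^u$, then gives $u\in C^\infty(\overline\Omega)$. By the Hopf lemma, $u_\nu>0$ on $\partial\Omega$.

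\textbf{Boundary strict convexity.} Near $\partial\Omega$ we have $e^{-u/2}\approx 1-u/2$, so $\arcosh(e^{-u/2})\approx\sqrt{-u}$. Hence $w\approx-\sqrt{-u}$ with $-u\sim d$, where $d$ is the distance to the boundary, so $w\approx -c\sqrt{d}$. For a uniformly convex domain, $-\sqrt d$ has Hessian blowing up like $d^{-3/2}$ in the normal direction and like $d^{-1/2}\kappa$ in the tangential directions. From this asymptotic expansion, using $u_\nu>0$, $u\in C^\infty$ and the strict convexity of $\partial\Omega$, we obtain $D^2w>0$ in a boundary strip $\{d<\delta\}$. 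This bound is uniform over a $C^2$-compact family of uniformly convex domains.

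\textbf{Constant rank.} The next step is a constant-rank theorem for convex solutions of \eqref{eq:laplace-transformed-intro}, written as
\[
\Delta w=\frac{|Dw|^2+\tfrac12}{s(w)}=:f(w,Dw).
\]
By the Korevaar--Lewis/Caffarelli--Guan--Ma theory, the relevant structural condition is that $(w,p)\mapsto 1/f(w,p)$ be suitably convex; equivalently, that $s(w)/(|p|^2+\tfrac12)$ be concave in $w$ and that the inverse-convexity condition of Alvarez--Lasry--Lions hold jointly in $p$. We would verify this by hand, which is the key computation. Set $\phi=\sigma_{\ell+1}(D^2w)$ near a point where the rank is minimal, equal to $\ell$. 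We then show $\Delta\phi\le C(\phi+|D\phi|)$. The third-derivative terms are handled as in Caffarelli--Friedman. The remaining terms are $s'$ terms, with $s'(w)=-\cosh 2w<0$, together with the gradient terms; their sign follows from the identity $h''=-2e^h$. If the direct computation fails, we would instead use the microscopic convexity principle of Bian--Guan, checking that $(w,p)\mapsto f$ satisfies their condition. The strong maximum principle then forces $D^2w$ to have constant rank in $\Omega$ whenever $w$ is convex.

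\textbf{Deformation.} Join $\Omega_0=B$, a ball, to $\Omega_1=\Omega$ through uniformly convex smooth domains $\Omega_t$, for example by Minkowski combination of support functions. On a ball the solution is radial and explicit ODE analysis gives $D^2w>0$; this is the radial model. Let $T$ be the set of $t$ for which $w_t$ is strictly convex.
\begin{itemize}
\item $T$ is open by the local $C^2$ stability of $u_t$ in $t$, which follows from uniqueness and uniform estimates, combined with the uniform boundary strip estimate.
\item $T$ is closed: a limit $w_{t_*}$ is convex, strictly convex in the boundary strip, and by constant rank its Hessian rank is constant. It must therefore be full rank, so $D^2w_{t_*}>0$.
\end{itemize}
Hence $1\in T$.

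\textbf{Main obstacle.} The main obstacle is the structural verification for the constant-rank theorem. The coefficient $s(w)$ vanishes at the boundary, and $s$ is not concave in the naive sense, so the sign bookkeeping there is delicate.
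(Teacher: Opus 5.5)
\textbf{There is a genuine gap in the constant-rank step.} Your overall architecture matches the paper's: comparison and Schauder for regularity, Hopf plus a boundary-collar expansion, a radial ball model, and a Minkowski deformation with openness from $C^2$ stability and closedness from constant rank. The problem is the structural hypothesis of the constant-rank theorem, which carries the real content. You misidentify it and leave it unverified. You first say $1/f$ should be convex, then write ``equivalently, $s(w)/(|p|^2+\tfrac12)$ concave in $w$'', and you name the failure of concavity of $s$ as the main obstacle.

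The sign is backwards. For fixed $p$, the Bian--Guan condition (equivalently Korevaar--Lewis) is convexity of the sublevel set $\{(A,z): s(z)\tr(A^{-1})\le \gamma_p\}$, with $\gamma_p=|p|^2+\tfrac12$. This amounts to convexity of $1/f=s(z)/\gamma_p$ in $z$, i.e.\ convexity of $s$. That holds, because $s''(z)=-2\sinh(2z)>0$ for $z<0$.

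The paper checks it by putting $B=A/s(z)$. The constraint becomes $\tr(B^{-1})\le\gamma_p$. This is a convex set, by convexity of $A\mapsto\tr A^{-1}$. It is also closed under $B\mapsto\lambda B$ for $\lambda\ge1$, by homogeneity of degree $-1$. For a convex combination, $A_\theta/s(z_\theta)$ equals $t_\theta/s(z_\theta)$ times a convex combination of $B_1,B_2$, where $t_\theta=\theta s(z_1)+(1-\theta)s(z_2)$. That factor is $\ge1$ exactly because $s$ is convex. As written, your proposal would try to verify a false inequality, so the closedness step of the deformation has no justification.

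The level-set form is essential here, not a convenience. The product $s(z)\tr(A^{-1})$ is not jointly convex near $z=0$: along $A=tI$, the Hessian determinant of $Ns(z)/t$ has the sign of $2ss''-(s')^2=\sinh^2(2z)-1<0$. Likewise $\tr(A^{-1})-\gamma_p/s(z)$ is not convex, because $1/s$ is convex. So a hand verification of function-level convexity cannot succeed.

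You also ask for convexity jointly in $p$. This is neither required, since Bian--Guan fix $p$, nor true: for fixed $(A,z)$ the admissible $p$ can form the complement of a ball.

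The sign $s'<0$ that you invoke is used in the paper's ball model, not in constant rank. At a first zero $r_0$ of $y'=\psi''$, differentiating the radial ODE gives $s\,y''=(N-1)sy/r^2-(N-1)s'y^2/r>0$, contradicting $y''(r_0)\le0$. Your ``explicit ODE analysis'' needs an argument of this kind, since the radial solution is not explicit in general dimension.
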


\begin{theorem}\label{thm:real-main}
Let $\Omega\subset\mathbb R^n$, $n\geq 3$, be a bounded, smooth,
uniformly strictly convex domain. Let
$u\in C^2(\Omega)\cap C^0(\overline{\Omega})$
be a $\sigma_2$-admissible solution of \eqref{eq:real-main}.
Then 
$u\in C^\infty(\overline{\Omega}),$
and
$-\operatorname{arcosh}(e^{-u/2})$
is strictly convex in $\Omega$.
\end{theorem}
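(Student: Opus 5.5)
We prove Theorem~\ref{thm:real-main} by a continuity (deformation) argument in the domain, combined with a constant-rank theorem for the transformed equation \eqref{eq:real-transformed-intro}.

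\textbf{Step 1: regularity and boundary behaviour.} First, $u\in C^\infty(\overline\Omega)$. Write the equation as $\sigma_2^{1/2}(D^2u)=e^{u}$. A strictly convex smooth subsolution of the form $A(|x|^2-R^2)$, or one built from a defining function of $\Omega$, exists because $\Omega$ is uniformly convex. The Caffarelli--Nirenberg--Spruck theory then gives a smooth admissible solution. Uniqueness follows from the comparison principle: the right-hand side $e^{2u}$ is increasing in $u$, so the maximum principle applies. This identifies the given $C^2\cap C^0$ solution with the smooth one.

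At $\partial\Omega$ we have $u=0$ and $u_\nu>0$ by Hopf. Hence $w=-\arcosh(e^{-u/2})\sim -\sqrt{-u}$ behaves like $-c\sqrt{d(x)}$ near the boundary. A direct expansion in boundary-adapted coordinates shows that $D^2w$ is positive definite in a uniform collar $\{d<\delta\}$. The tangential directions are controlled by the boundary curvature, which is positive by uniform convexity. The normal direction comes from the square-root singularity, which gives $w_{\nu\nu}\approx c\,d^{-3/2}/4>0$.

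\textbf{Step 2: the deformation.} Connect $\Omega$ to a ball $B$ by the family $\Omega_t=(1-t)B+t\Omega$ (Minkowski combination). Each $\Omega_t$ is smooth and uniformly convex, with uniform bounds on its curvature. For the ball, the solution is radial and solves an ODE. We verify directly that $w$ is strictly convex there: radial monotonicity gives $w'/r>0$, and $w''>0$ follows from the ODE, in the spirit of the paper's radial ball model.

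Let $T$ be the set of $t\in[0,1]$ for which $w_t$ is strictly convex in $\Omega_t$. Then $0\in T$. $T$ is open by local $C^2$ stability: in the interior, strict convexity on a compact set persists under small $C^2$ perturbations, and Step 1 gives uniform boundary positivity. For closedness, a limit point $t_*$ gives a $w_{t_*}$ that is convex, with strictly convex boundary collar. If $D^2w_{t_*}$ degenerated at some interior point, the constant-rank theorem would force rank $D^2w<n$ throughout the connected domain $\Omega_{t_*}$. This contradicts strict convexity near the boundary.

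\textbf{Step 3 (main obstacle): constant rank.} We must verify that \eqref{eq:real-transformed-intro}, written as $F(D^2w,Dw,w)=0$, satisfies the Bian--Guan structure condition. The operator $A\mapsto s^2\sigma_2(A)-s\,\tr(P_{\R}(p)A)$ is not concave on all matrices. It is, however, elliptic on the admissible set, since $D^2u=h'D^2w+h''Dw\otimes Dw$ and ellipticity in $u$ transfers to $w$.

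The key requirement is that
\[
(A,w)\mapsto F(A^{-1},p,w)
\]
be locally convex in $(A,w)$; this is the inverse-convexity condition. Using $\sigma_2(A^{-1})=\sigma_{n-2}(A)/\det A$ and $\tr(P A^{-1})$, we check it via the Alvarez--Lasry--Lions type computation. The dependence on $w$ enters only through $s(w)=-\sinh w\cosh w$, so what is needed is a concavity property of $1/s$ or $\log s$ along $w<0$.

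I expect this verification to be the hardest step, particularly the mixed $(A,w)$ terms, and it is presumably where the assumption $n\ge3$ enters. If the direct Bian--Guan check fails, the fallback is a Caffarelli--Guan--Ma style computation. That approach analyses $\sigma_{l+1}(D^2w)$ at a minimal-rank point and shows $\Delta\phi\le C(\phi+|D\phi|)$, so that the strong maximum principle applies.

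Once the constant-rank theorem holds, the closedness argument of Step 2 goes through. Taking $t=1$ completes the proof.
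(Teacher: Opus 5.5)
Your overall plan matches the paper's: a Minkowski deformation from a ball, openness from $C^2$ stability plus a boundary collar, and closedness from constant rank. But Step~3, which carries the real content, is not carried out, and the route you sketch for it would fail.

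\textbf{The formulation of the structural condition is wrong.} You state it as local convexity of the \emph{function} $(A,z)\mapsto F_{\R}(A^{-1},p,z)$. For this $F_{\R}$ that is false: the term $-s\,\tr(P_{\R}(p)A^{-1})$ is concave in $A$ and can dominate at admissible points. For instance, take $n=3$, $z$ with $s(z)=1$, $p=\sqrt q\,e_3$ and $A^{-1}=\operatorname{diag}(x_1,x_2,x_3)$. The second derivative of $A\mapsto F_{\R}(A^{-1},p,z)$ in the direction $\operatorname{diag}(x_1^{-2},0,-x_3^{-2})$ is $2\bigl(\frac{x_2+x_3-q}{x_1}+\frac{x_1+x_2}{x_3}-1\bigr)$. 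This is negative at $(x_1,x_2,x_3,q)=(10,1,999.2,1000)$, where $A^{-1}-p\otimes p=\operatorname{diag}(10,1,-0.8)\in\Gamma_2$. A common rescaling of $(x,q)$ moves this point onto $\{F_{\R}=0\}$ without changing the sign. What Theorem~\ref{thm:bian-guan}(iii) needs, and what the paper proves, is only convexity of the sublevel set $\mathcal C^{\R}_p=\{(A,z):F_{\R}(A^{-1},p,z)\le0\}$.

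\textbf{You also point to the wrong property of $s$.} $1/s(z)=2/\sinh(2|z|)$ is convex, not concave, and log-concavity of $s$ does not suffice. What is needed is convexity of $s$ itself, $s''(z)=-2\sinh(2z)>0$. It is even necessary: for $p=0$, the slice of $\mathcal C^{\R}_0$ along $A=\lambda A_0$ is the epigraph $\{\lambda\ge 2\sigma_2(A_0^{-1})^{1/2}s(z)\}$.

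\textbf{The missing argument.} Set $B=A/s(z)$. Then $F_{\R}(A^{-1},p,z)\le0$ becomes the $z$-free condition $\sigma_2(B^{-1})-\tr(P_{\R}(p)B^{-1})\le\frac14$, i.e.\ $B\in K^{\R}_p$.
\begin{itemize}
\item For $p=|p|\alpha\ne0$, $K^{\R}_p$ is the sublevel set $\{(\sigma_2(B^{-1})-\frac14)/\tr(Q_\alpha B^{-1})\le|p|^2\}$ of the convex function in Proposition~\ref{prop:real-inverse}. That G{\aa}rding--Alvarez--Lasry--Lions inverse-convexity theorem is a genuine input, not a routine computation, and it is where $n\ge3$ enters.
\item The case $p=0$ follows by intersecting over $p_\varepsilon=\varepsilon\alpha$.
\item $K^{\R}_p$ is closed under $B\mapsto\lambda B$ for $\lambda\ge1$, because $P_{\R}(p)\ge0$.
\end{itemize}
Convexity of $\mathcal C^{\R}_p$ then follows from
\[
\frac{A_\theta}{s(z_\theta)}=\frac{t_\theta}{s(z_\theta)}\Bigl(\frac{\theta s_1}{t_\theta}B_1+\frac{(1-\theta)s_2}{t_\theta}B_2\Bigr),\qquad t_\theta=\theta s_1+(1-\theta)s_2\ge s(z_\theta).
\]
Without this, your closedness step has no constant-rank theorem to invoke.

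\textbf{Two smaller points are only asserted.}
\begin{itemize}
\item On the ball, $\psi''>0$ needs $\psi''(0)>0$ from the leading-order balance. A first zero $r_0$ of $y'$ must then be excluded using $ry<cs$ at $r_0$ and $s'<0$ in the differentiated ODE.
\item Openness needs the boundary collar to be uniform in $t$. That requires $C^2(\overline{\Omega_{t_0}})$ convergence of the pulled-back $u_t$ (implicit function theorem plus uniqueness), not just the single-domain expansion of Step~1.
\end{itemize}
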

\begin{theorem}\label{thm:complex-main}
Let $\Omega\subset\mathbb C^m$, $m\geq 2$, be a bounded smooth
domain that is uniformly strictly convex when regarded as a domain
in $\mathbb R^{2m}$. Let
$u\in C^2(\Omega)\cap C^0(\overline{\Omega})$
be a complex $\sigma_2$-admissible solution of\eqref{eq:complex-main}.
Then 
$u\in C^\infty(\overline{\Omega}),$
and
$w=-\operatorname{arcosh}(e^{-u/2})$
is strictly convex in $\Omega$ with respect to the underlying real
variables. Equivalently,
\[
D_{\mathbb R}^2w>0
\qquad\hbox{in }\Omega.
\]
\end{theorem}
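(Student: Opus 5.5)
We follow the deformation--constant-rank strategy described in the introduction, applied to the transformed complex equation. First, regularity: the boundary datum is constant and the domain is uniformly strictly convex in $\R^{2m}$, hence strongly pseudoconvex. So the Dirichlet theory for complex $\sigma_2$ equations with a positive smooth right-hand side applies. Since $u<0$ and $u\ge$ a subsolution built from a defining function, $e^{2u}$ is bounded between positive constants and uniqueness holds by comparison. We therefore get $u\in C^\infty(\overline\Omega)$. Writing $u=h(w)$ with $h$ as in \eqref{eq:main-transform} and using \eqref{eq:h-ode}, we obtain
\[
u_{i\bar j}=h'(w)w_{i\bar j}+h''(w)w_iw_{\bar j}.
\]
Substituting into $\sigma_2(u_{i\bar j})=e^{2u}$ gives a complex analogue of \eqref{eq:real-transformed-intro}: $s^2\sigma_2(w_{i\bar j})-s\,\tr(P_{\C}(\partial w)\,w_{i\bar j})-c_m=0$, with $P_{\C}(p)=|p|^2I-p\otimes\bar p$. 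Near $\partial\Omega$ we have $w\sim-\sqrt{-u}$ and $|Dw|\to\infty$. Boundary strict convexity of $w$ in a collar then follows from the Hopf lemma ($u_\nu>0$) and the uniform convexity of $\partial\Omega$: the tangential Hessian of $-\sqrt{-u}$ is controlled by the curvature of $\partial\Omega$, and the normal direction dominates by the square-root blow-up.

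Second, deformation. Join $\Omega$ to a ball $\Omega_0=B$ by a smooth family $\Omega_t$ of uniformly strictly convex domains, e.g.\ via Minkowski combinations $(1-t)B+t\Omega$. On the ball, the solution is radial, and an ODE analysis of the radial model shows that $w$ is strictly convex. Let $T$ be the set of $t$ for which $D^2_\R w_t>0$ in $\Omega_t$. $T$ is open: boundary strict convexity holds uniformly in a collar, and local $C^2$ stability (continuity of $w_t$ in $C^2$ on compact interior sets, from uniqueness and smooth dependence) preserves strict convexity in the interior. For closedness, a limit $w_{t_0}$ is convex, strictly so near the boundary. If $D^2_\R w_{t_0}$ degenerates somewhere in the interior, a constant-rank theorem forces the rank to be constant in $\Omega_{t_0}$, which contradicts full rank near $\partial\Omega_{t_0}$. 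Hence $T=[0,1]$.

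The main obstacle is the constant-rank step. The operator is elliptic in the Hermitian Hessian $w_{i\bar j}$, but the convexity in question concerns the full real Hessian $D^2_\R w$, of which $w_{i\bar j}$ is only a compression. We would try to set up the equation as $G(D^2_\R w,Dw,w)=0$ with $G(A,p,w)=F(\text{compression of }A,p,w)$. This is degenerate elliptic in $A$, and we would verify the Bian--Guan structural condition: $A\mapsto G(A^{-1},p,w)$ should be locally convex on positive definite matrices, jointly in suitable variables with $w$. This requires checking the inverse-convexity of $\sigma_2$ composed with compression, together with the sign of the lower-order terms: $s(w)$ and its derivatives, and the $P_\C$ term, which is linear in the Hessian with a coefficient quadratic in $p$. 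The inverse-convexity of the compressed $\sigma_2$ part should follow from the concavity of $\sigma_2^{1/2}$ and the fact that compression commutes well with the inverse map on positive matrices (via Schur complements). The delicate part is the mixed $w$-dependence through $s(w)=-\sinh w\cosh w$. We would verify it by direct computation, expecting that the identity \eqref{eq:h-ode} is exactly what makes the required convexity in $w$ hold. If the global Bian--Guan condition fails, the fallback is a direct microscopic computation at a minimal-rank point. That computation would test the equation with $\phi=\sigma_{l+1}(D^2_\R w)+\sigma_{l+2}/\sigma_{l+1}$ and use the equation to absorb the third-derivative terms.
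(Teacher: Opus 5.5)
Your outer architecture (Minkowski deformation from a ball, a radial ODE on the ball, a uniform boundary collar from Hopf's lemma, openness from local $C^2$ stability, closedness from constant rank) is the paper's. The gap is the constant-rank step. It is the heart of the proof, and you leave it as an expectation; two of your statements there are wrong.

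\textbf{Ellipticity.} The operator is not degenerate elliptic in the real Hessian. If it were, Theorem~\ref{thm:bian-guan}, which requires $(F^{\alpha\beta})>0$, would not apply. Put $N=C(D^2_{\R}w)-s(w)^{-1}\partial w\otimes\overline{\partial w}=(u_{i\bar j})/h'(w)$; then $N\in\Gamma_2$ by admissibility of $u$ and $h'>0$. Compression sends a nonzero real rank-one direction $\lambda\otimes\lambda$ to the nonzero Hermitian direction $\frac14\zeta\otimes\bar\zeta$. Hence $D_HF_{\C}[\lambda\otimes\lambda]=\frac{s^2}{4}\overline{\zeta}^{\,T}T_1(N)\zeta>0$, so the equation is strictly elliptic.

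\textbf{Convexity in $A$.} For $q\neq0$ and fixed $z$, the map $A\mapsto F_{\C}(A^{-1},p,z)$ is not a convex function. Along $A=tI$ it has the form $a t^{-2}-b t^{-1}-\frac14$ with $a,b>0$, which is concave for large $t$. Only the sublevel set is convex, and your suggested mechanism cannot detect this. Concavity of $\sigma_2^{1/2}$ together with Schur-complement concavity of $A\mapsto C(A^{-1})^{-1}$ gives at most convexity of $A\mapsto\sigma_2(C(A^{-1}))$. The gradient term then subtracts $\tr(P_{\C}(q)C(A^{-1}))$, which is itself convex in $A$, because $A\mapsto C(A^{-1})$ is matrix convex and $P_{\C}(q)\ge0$. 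So nothing follows about the difference.

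\textbf{The missing reduction.} You need the following steps.
\begin{itemize}
\item By the rank-one identity \eqref{eq:complex-rank-one}, $\sigma_2(M)-\tr(P_{\C}(q)M)=\sigma_2(M-q\otimes\bar q)$, so the gradient term is absorbed into a shifted $\sigma_2$.
\item The identity \eqref{eq:h-ode} makes the equation exactly $s^2\sigma_2-s\tr(P_{\C}\,\cdot)-\frac14$, homogeneous of degrees $2$ and $1$ with constant term $\frac14$. You can therefore set $B=A/s(z)$, and the sublevel condition becomes the $z$-free statement $B\in K^{\C}_q=\{B>0:\sigma_2(C(B^{-1})-q\otimes\bar q)\le\frac14\}$.
\item Convexity of $K^{\C}_q$ is a genuinely nontrivial compressed inverse-convexity theorem (Proposition~\ref{prop:complex-inverse}, imported from \cite{chenlima}). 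Even in the real case the paper needs the quotient convexity of Proposition~\ref{prop:real-inverse}, not anything derived from $\sigma_2^{1/2}$ alone.
\item Check dilation closure $B\mapsto\lambda B$ for $\lambda\ge1$, using $P_{\C}(q)\ge0$.
\item Obtain joint convexity in $(A,z)$ from convexity of $s$ on $(-\infty,0)$. With $t_\theta=\theta s(z_1)+(1-\theta)s(z_2)\ge s(z_\theta)$, the matrix $A_\theta/s(z_\theta)$ is a dilation by $t_\theta/s(z_\theta)\ge1$ of a convex combination of $A_1/s(z_1)$ and $A_2/s(z_2)$.
\end{itemize}

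Your fallback, a direct microscopic computation with $\sigma_{l+1}+\sigma_{l+2}/\sigma_{l+1}$, does not bypass any of this. The Bian--Guan structural condition is exactly what makes that computation close, so you would face the same inverse-convexity issue there.
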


The proof has three components. First, the transformed equations satisfy the
structural condition in the Bian--Guan microscopic convexity principle
\cite{BianGuan2009,BianGuan2010}. For the real and complex Hessian equations,
this step uses inverse-convexity statements arising from G{\aa}rding's theory
of hyperbolic polynomials and its convex-analytic refinements
\cite{Garding1959,BauschkeGulerLewisSendov2001,Renegar2006}. Second, direct
radial ODE arguments give strict convexity on balls, while a boundary
calculation gives strict convexity near the boundary of every uniformly
strictly convex domain. Third, local $C^2$ stability and Minkowski deformation
reduce the general case to the ball; constant rank prevents loss of strict
convexity at the closedness step.

As a scalar consequence of the main theorems, the arcosh transform also
recovers the classical square-root concavity. More precisely, strict
convexity of
$-\operatorname{arcosh}(e^{-u/2})$
implies strict concavity of $\sqrt{-u}$. Moreover, the exponent
$1/2$ is optimal for the Liouville problem. These consequences are
established in Subsection~\ref{subsec:square-root-consequence}.

The paper is organized as follows. Section~2 collects the algebraic
and constant-rank tools and establishes the square-root concavity
consequence and its sharpness. Section~3 derives the transformed
equations and verifies the corresponding constant-rank structures
for the Liouville equation and the real and complex
$\sigma_2$-Hessian equations. Section~4 establishes boundary strict
convexity and analyzes the radial ball models. Section~5 develops
the domain-deformation framework, proves the required local
$C^2$ stability results, and completes the proofs of the main
theorems.

\section*{Acknowledgments}
The authors thank Professor Xi-Nan Ma for bringing this question to their attention. They were supported by the National Natural Science Foundation of China [grant number 2025YFA1017601]. 
\section{Preliminaries}

\subsection{The real and Hermitian \texorpdfstring{$\sigma_2$}{sigma2} operators}

For a real symmetric matrix $M\in S^n$, let $\lambda(M)=(\lambda_1,\ldots,\lambda_n)$ be its eigenvalues and set
\[
\sigma_1(M)=\sum_i\lambda_i,\qquad
\sigma_2(M)=\sum_{i<j}\lambda_i\lambda_j.
\]
Equivalently,
\begin{equation}\label{eq:sigma-two-formula}
\sigma_2(M)=\frac12\{(\tr M)^2-\tr(M^2)\}.
\end{equation}
The G{\aa}rding cone is
\[
\Gamma_2=\{\lambda\in\R^n:\sigma_1(\lambda)>0,\ \sigma_2(\lambda)>0\}.
\]
A $C^2$ function $u$ on a real domain is called $\sigma_2$-admissible if $\lambda(D^2u)\in\Gamma_2$ pointwise.

For a Hermitian matrix $M\in\mathcal H^n$, $\sigma_1(M)$ and $\sigma_2(M)$ are defined using the real eigenvalues of $M$. A real-valued function $u$ on a complex domain is complex $\sigma_2$-admissible if $\lambda(u_{i\bar j})\in\Gamma_2$ at every point.

The first Newton tensor is
\begin{equation}\label{eq:newton-tensor}
T_1(M)=\sigma_1(M)I-M.
\end{equation}
On $\Gamma_2$ one has
\begin{equation}\label{eq:newton-positive}
T_1(M)>0.
\end{equation}
Indeed, after diagonalizing $M$, the eigenvalues of $T_1(M)$ are
\[
\sum_{j\neq i}\lambda_j(M),\qquad i=1,\ldots,n,
\]
which are positive on $\Gamma_2$. We shall repeatedly use the monotonicity of $\sigma_2$ on the admissible branch: if $M\in\Gamma_2$ and $N\geq0$, then
\[
\sigma_2(M+N)\geq\sigma_2(M).
\]
This follows by integrating
\[
\frac{d}{dt}\sigma_2(M+tN)=\tr(T_1(M+tN)N)\geq0,
\]
because $M+tN\in\Gamma_2$ and $T_1(M+tN)>0$ for $0\leq t\leq1$.

\subsection{Rank-one formulas}

For $p\in\R^n$ define
\begin{equation}\label{eq:real-P}
P_{\R}(p)=|p|^2I-p\otimes p.
\end{equation}
For every real symmetric matrix $r$ and every $t\in\R$,
\begin{equation}\label{eq:real-rank-one}
\sigma_2(r+t\,p\otimes p)
=
\sigma_2(r)+t\tr(P_{\R}(p)r).
\end{equation}
Indeed, using \eqref{eq:sigma-two-formula}, the coefficient of $t$ is
\[
|p|^2\tr r-p^Trp=\tr((|p|^2I-p\otimes p)r),
\]
and the coefficient of $t^2$ is $\sigma_2(p\otimes p)=0$.

For $q\in\C^m$ define the Hermitian matrix
\begin{equation}\label{eq:complex-P}
P_{\C}(q)=|q|^2I-q\otimes\bar q.
\end{equation}
For every Hermitian matrix $M$,
\begin{equation}\label{eq:complex-rank-one}
\sigma_2(M-tq\otimes\bar q)
=
\sigma_2(M)-t\tr(P_{\C}(q)M).
\end{equation}
This follows from
\[
\frac{d}{dt}\sigma_2(M-tq\otimes\bar q)
=-\tr(T_1(M)q\otimes\bar q)
=-\bar q^TT_1(M)q,
\]
and
\[
\bar q^TT_1(M)q=|q|^2\tr M-\bar q^TMq
=\tr(P_{\C}(q)M),
\]
while the quadratic term vanishes because $q\otimes\bar q$ has rank one.

\subsection{Complex Hessians as compressed real Hessians}

Identify $\C^m$ with $\R^{2m}$ by $z_j=x_j+iy_j$. If
\[
H=\begin{pmatrix}U&V\\ V^T&W\end{pmatrix}\in S^{2m},
\]
define
\begin{equation}\label{eq:compression-block}
C(H)=\frac14\{U+W+i(V-V^T)\}.
\end{equation}
Equivalently,
\begin{equation}\label{eq:compression-R}
C(H)=\frac14R^*HR,\qquad
R=\begin{pmatrix}I\\ iI\end{pmatrix}\in\C^{2m\times m}.
\end{equation}
If $H=D^2_{\R}u$, then
\begin{equation}\label{eq:compression-hessian}
C(D^2_{\R}u)=(u_{i\bar j})_{1\leq i,j\leq m}.
\end{equation}
If $\lambda=(a,b)\in\R^m\times\R^m$ and $\zeta=a-ib\in\C^m$, then
\begin{equation}\label{eq:compression-rank-one}
C(\lambda\otimes\lambda)=\frac14\zeta\otimes\bar\zeta.
\end{equation}
Thus a nonzero real rank-one positive semidefinite direction is compressed to a nonzero Hermitian rank-one positive semidefinite direction.

\subsection{Bian--Guan constant rank theorem}

We use the following application form of the Bian--Guan microscopic convexity principle.

\begin{theorem}\label{thm:bian-guan}
Let $D\subset\R^N$ be connected and let $v\in C^{3,1}(D)$ be a convex solution of
\[
F(D^2v,Dv,v,x)=0,
\]
where $F$ is $C^{2,1}$ in its variables. Assume:
\begin{itemize}
\item[(i)] $F$ is elliptic along the solution, namely $(F^{\alpha\beta})>0$;
\item[(ii)] $F(0,Dv,v,x)\neq0$ along the solution;
\item[(iii)] for each fixed relevant gradient $p$, the set
\[
\{(A,z,x)\in S^N_{++}\times\R\times D:
F(A^{-1},p,z,x)\leq0\}
\]
is locally convex near the relevant points.
\end{itemize}
Then $\rank D^2v$ is constant in $D$.
\end{theorem}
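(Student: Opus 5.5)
This statement is an application form of the Bian--Guan microscopic convexity principle \cite{BianGuan2009,BianGuan2010}, so I will reduce it to their theorem rather than reprove constant rank from scratch. Bian and Guan prove: if $v$ is a convex solution of $F(D^2v,Dv,v,x)=0$ with $F$ elliptic and the inverse-convexity condition holding, namely that $(A,z,x)\mapsto F(A^{-1},p,z,x)$ is locally convex in the appropriate sense, then $D^2v$ has constant rank. The plan is therefore to check that (i)--(iii) are exactly their hypotheses, and to use (ii) only to exclude degenerate behaviour.

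\textbf{Local argument.} Suppose the minimal rank of $D^2v$ in $D$ is $\ell<N$, attained at $x_0$. In a neighbourhood $\mathcal O$ of $x_0$ I will use the Bian--Guan test function
\[
\phi=\sigma_{\ell+1}(D^2v)+\frac{\sigma_{\ell+2}(D^2v)}{\sigma_{\ell+1}(D^2v)}.
\]
If $\sigma_{\ell+1}$ vanishes at $x_0$ this is interpreted via a standard regularization. Order the eigenvalues so that the \emph{bad} set $B$ (small eigenvalues) and the \emph{good} set $G$ (eigenvalues bounded below) are separated. Differentiating the equation twice in the bad directions, and using $C^{3,1}$ regularity together with $C^{2,1}$ regularity of $F$, gives a differential inequality
\[
F^{\alpha\beta}\phi_{\alpha\beta}\le C(\phi+|D\phi|)-Q,
\]
where $Q$ is a quadratic form in the third derivatives $v_{ij\alpha}$ with $i\in B$. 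The key step is to show $Q\ge0$. The third-derivative terms produced by $F^{\alpha\beta,\gamma\delta}$ combine with the terms $2\sum F^{\alpha\beta}v_{ij\alpha}v_{ij\beta}/\lambda_j$ for $j\in G$. The result is precisely the second variation of $A\mapsto F(A^{-1},p,z,x)$ evaluated along the good block. By (iii) this second variation is nonnegative, which is the matrix identity $D^2_A[F(A^{-1})]=F^{\alpha\beta,\gamma\delta}\dots+2F^{\alpha\beta}(A^{-1}\dot A A^{-1}\dot A A^{-1})_{\alpha\beta}$. The mixed $(z,x)$ terms are handled the same way, using joint convexity in $(A,z,x)$.

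\textbf{Conclusion.} Condition (ii) ensures $\ell\ge1$ is not forced degenerate at $\ell=0$: if $D^2v=0$ at a point then $F(0,Dv,v,x)=0$, contradicting (ii). It also makes the lower-order terms controllable. The strong maximum principle, available by ellipticity (i), then gives $\phi\equiv0$ on $\mathcal O$. Hence $\{\rank D^2v=\ell\}$ is open; it is closed by continuity of eigenvalues and minimality of $\ell$, so connectedness of $D$ yields constant rank.

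\textbf{Main obstacle.} The main obstacle is the algebra identifying $Q$ with the inverse-convexity second variation, including the mixed $z,x$ terms. Since this is exactly Bian--Guan's computation, I would cite it rather than redo it, checking only that their structural condition coincides with (iii) at the points where the solution takes values.
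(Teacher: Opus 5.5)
Your route is the paper's: Theorem~\ref{thm:bian-guan} is given there only with a citation to Bian--Guan, and you also reduce to their computation. The reduction hinges on matching hypotheses, and that is where your sketch fails. Hypothesis (iii) says only that the \emph{sublevel set} $\{G\le 0\}$ of $G(A,z,x)=F(A^{-1},p,z,x)$ is convex. It does not say that $G$ is a convex function, which is the hypothesis in the version you quote. So the step ``by (iii) this second variation is nonnegative'' is false as stated: at points of $\{G=0\}$, convexity of $\{G\le0\}$ controls $D^2G$ only on $\ker\nabla G$.

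This is not a technicality in the present paper. For the Liouville operator, restricting to $A=aI$ gives $G=Ns(z)/a-\gamma_p$. Its Hessian in $(a,z)$ has determinant $N^2a^{-4}\bigl(2ss''-(s')^2\bigr)=N^2a^{-4}\bigl(\sinh^2(2z)-1\bigr)$, which is negative for $z$ near $0$. Since $w\to0$ at $\partial\Omega$, such values are attained. So function convexity genuinely fails there, and only the level-set property verified in the proof of Proposition~\ref{prop:laplace-constant-rank} is available.

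What is missing is a tangency argument. Differentiating the equation once in a bad direction $i$ gives
\[
F^{\alpha\beta}v_{\alpha\beta i}+F^{p_k}v_{ki}+F_zv_i+F_{x_i}=0 .
\]
Here $v_{ki}=O(\phi)$ for $i$ bad, and the terms with third derivatives carrying at least two bad indices are of the kind the quotient $\sigma_{\ell+2}/\sigma_{\ell+1}$ absorbs. Hence the direction $\xi_i$ in which you evaluate the second variation of the reduced good-block function $\tilde G$ satisfies $\nabla\tilde G\cdot\xi_i=O(\phi+|D\phi|)$ up to such controlled errors. Explicitly, $\xi_i=(\dot A,v_i,e_i)$ with $\dot A=-AX_iA$, where $X_i$ is the good block of $(v_{\alpha\beta i})$ and $A$ is the inverse of the good block of $D^2v$.

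Remove this small normal component. Convexity of $\{\tilde G\le0\}$ then gives $D^2\tilde G[\xi_i,\xi_i]\ge -C(\phi+|D\phi|)$, which is all the differential inequality needs, \emph{provided} $|\nabla\tilde G|$ is bounded below. Ellipticity gives $\nabla_A\tilde G=-A^{-1}(F^{\alpha\beta})A^{-1}\neq0$ only when the good block is nonempty, i.e.\ $\ell\ge1$. For $\ell=0$ the only variables are $(z,x)$, the gradient may vanish, and a convex sublevel set then carries no second-order information.

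This is the actual role of (ii): a rank-zero point would force $F(0,Dv,v,x)=0$. So excluding $\ell=0$ via (ii) is right, but for this reason, not to ``control lower-order terms''. You should either supply this tangency step or cite a form of the microscopic convexity principle stated under a level-set hypothesis. The function-convexity theorem you quote does not cover (iii).
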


This is the form used in the real Hessian and complex Hessian deformation arguments; see \cite{BianGuan2009,BianGuan2010}.

\subsection{Inverse-convexity inputs}

We first record the real inverse-convexity statement. Let $\alpha\in\R^n$, $|\alpha|=1$, and
\[
Q_\alpha=I-\alpha\otimes\alpha.
\]

\begin{proposition}[\cite{limasa}]\label{prop:real-inverse}
Let $n\geq3$ and $\mu>0$. The function
\begin{equation}\label{eq:real-inverse-function}
A\mapsto
\frac{\sigma_2(A^{-1})-\mu}{\tr(Q_\alpha A^{-1})}
\end{equation}
is convex on $S^n_{++}$.
\end{proposition}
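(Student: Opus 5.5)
We first rewrite the quotient in block form and then prove convexity term by term. Put $B=A^{-1}$ and rotate coordinates (both sides are invariant under $A\mapsto O^TAO$ with $O\alpha=\alpha$, and under the corresponding change of $\alpha$) so that $\alpha=e_n$. Write
\[
A=\begin{pmatrix}A'&a\\ a^T&A_{nn}\end{pmatrix},\qquad S=A'-\frac{a\otimes a}{A_{nn}},\qquad c=\frac{a}{A_{nn}},
\]
so that $S\in S^{n-1}_{++}$ is the Schur complement. Then $\tr(Q_\alpha B)=\tr B'$ with $B'=S^{-1}$. Using the block inverse formula and $\sigma_2(B)=\sigma_2(B')+B_{nn}\tr B'-|b|^2$, where $b$ is the off-diagonal column of $B$, the quotient becomes
\[
\frac{\sigma_2(S^{-1})}{\tr S^{-1}}+\frac1{A_{nn}}
+c^T\Big(S^{-1}-\frac{S^{-2}}{\tr S^{-1}}\Big)c-\frac{\mu}{\tr S^{-1}}.
\]
We then treat the pieces separately, and convexity of the sum gives the proposition.

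\emph{The $\mu$ term.} The map $A\mapsto S$ is matrix-concave on $S^n_{++}$, since the Schur complement is concave. The map $S\mapsto 1/\tr(S^{-1})$ is concave and nondecreasing, being a harmonic mean. The composition is therefore concave, so $-\mu/\tr(Q_\alpha A^{-1})$ is convex. This is the only place $\mu>0$ enters.

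\emph{The first term.} On $S^{n-1}_{++}$ we have
\[
\frac{\sigma_2(S^{-1})}{\sigma_1(S^{-1})}=\frac{\sigma_{n-3}(S)}{\sigma_{n-2}(S)},
\]
which is the reciprocal of the positive concave function $\sigma_{n-2}/\sigma_{n-3}$ on $S^{n-1}_{++}$. For $n=3$ this is $1/\sigma_1(S)$. Here $n\geq3$ is used, so that $n-1\geq2$ and the ratio is defined and non-trivial. Hence this term is convex and nonincreasing in $S$. Composing with the concave map $A\mapsto S$ gives a convex function of $A$.

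\emph{The remaining terms.} We must show that
\[
G(A)=\frac1{A_{nn}}+c^TMc,\qquad M=S^{-1}-\frac{S^{-2}}{\tr S^{-1}}\geq0,
\]
is convex in $A$. Note that $M\geq0$ because the eigenvalues of $S^{-1}$ are at most $\tr S^{-1}$. The difficulty is that $(S,c,A_{nn})$ depends nonlinearly on $A$. Observe that
\[
\frac1{A_{nn}}+c^TS^{-1}c=B_{nn}=e_n^TA^{-1}e_n,
\]
which is convex in $A$. So $G=B_{nn}-|b|^2/\tr B'$ with $b=-S^{-1}c$. We would verify convexity of this expression directly by a second variation. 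Take $A_t=A+tE$, so $\dot B=-BEB$ and $\ddot B=2BEBEB$. Using simultaneous normalization, take $B'$ diagonal at the base point. Then show
\[
\frac{d^2}{dt^2}\Big(B_{nn}-\frac{|b|^2}{\tr B'}\Big)\geq0
\]
by writing it as a sum of squares plus the positive contribution $2(BEBEB)_{nn}$. The expected key inequality is Cauchy--Schwarz in the form
\[
\frac{|X|^2}{\tr B'}\leq X^TB'^{-1}X\quad\text{type bounds,}\qquad\text{more precisely}\quad |b|^2\leq \tr B'\,\big(b^TB'^{-1}b\big).
\]
Alternatively, we would try to exhibit $\frac{|b|^2}{\tr B'}$ as a supremum of functions of the form $\ell(A^{-1})$ with $\ell$ linear and nonnegative on PSD, using $\frac{|b|^2}{t}=\sup_y\,(2y\cdot b-t|y|^2)$. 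This would make $G$ an infimum over $y$ of $e_n^TA^{-1}e_n-2y\cdot b+|y|^2\tr B'=\tr(A^{-1}W_y)$ with $W_y\geq0$. Each such function is convex, but an infimum of convex functions is not convex in general, so this route needs a joint convexity argument in $(A,y)$. We expect this step to be the main obstacle, and would first attempt the explicit second-variation computation, falling back on the argument of \cite{limasa}.
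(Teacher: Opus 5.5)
Your block reduction is correct. With $\alpha=e_n$ and $B=A^{-1}$ one has $\tr(Q_\alpha B)=\tr B'$, $B'=S^{-1}$, $b=-S^{-1}c$, $B_{nn}=1/A_{nn}+c^TS^{-1}c$ and $\sigma_2(B)=\sigma_2(B')+B_{nn}\tr B'-|b|^2$. You also handle the first term and the $\mu$-term correctly. They follow from the concavity and Loewner monotonicity of the Schur complement $A\mapsto S$, the concavity of $\sigma_{n-2}/\sigma_{n-3}$, and the concavity of $S\mapsto 1/\tr S^{-1}$; these are the two ingredients the paper names when it attributes the statement to \cite{limasa}. (This part does not actually use $n\ge3$: for $n=2$ the first term is $0$.)

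The gap is the remaining term. You never prove that $G(A)=(A^{-1})_{nn}-|b|^2/\tr B'$ is convex, and this is the heart of the proposition, because $(S,c,A_{nn})$ depend nonlinearly on $A$. The Cauchy--Schwarz bound $|b|^2\le\tr B'\,(b^TB'^{-1}b)$ that you single out is only a pointwise inequality. It gives $M\ge0$, i.e.\ $G\ge 1/A_{nn}$, and says nothing about the second variation, which is a quadratic-form inequality in the perturbation. Falling back on the argument of \cite{limasa} amounts to citing the statement you are asked to prove.

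The missing step can be supplied along your second route, because the joint convexity you ask for does hold. Write $\tilde y=(y,0)\in\R^n$ and use $|y|^2I_{n-1}-y\otimes y=\sum_{1\le i<j\le n-1}(y_ie_j-y_je_i)^{\otimes 2}$. Then
\[
W_y=\begin{pmatrix}|y|^2I_{n-1}&-y\\ -y^T&1\end{pmatrix}
=(e_n-\tilde y)(e_n-\tilde y)^T+\sum_{1\le i<j\le n-1}(y_ie_j-y_je_i)(y_ie_j-y_je_i)^T,
\]
so $\tr(A^{-1}W_y)$ is a finite sum of terms $v^TA^{-1}v$ with $v$ affine in $y$.

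The map $(v,A)\mapsto v^TA^{-1}v$ is jointly convex on $\R^n\times S^n_{++}$, since its epigraph is $\{(v,A,t):\begin{pmatrix}A&v\\ v^T&t\end{pmatrix}\ge0\}$. Hence $(A,y)\mapsto\tr(A^{-1}W_y)$ is jointly convex. Its partial minimum over $y$ is attained at $y=b/\tr B'$ and equals $G(A)$, so $G$ is convex in $A$.

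With this inserted, your Schur-complement decomposition becomes a complete, self-contained proof. That is more than the paper provides: it only cites \cite{limasa} and points to G{\aa}rding quotient concavity plus the Alvarez--Lasry--Lions reciprocal-trace principle. As written, however, the proposal stops at exactly the one step that is not standard.
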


This formulation follows from G{\aa}rding quotient concavity together with
the reciprocal-trace principle of Alvarez, Lasry, and Lions; see also
\cite{AlvarezLasryLions1997,BauschkeGulerLewisSendov2001,Garding1959,Renegar2006}.

The complex input is the compressed inverse-convexity theorem.

\begin{proposition}[\cite{chenlima}]\label{prop:complex-inverse}
Let $m\geq2$. For every $q\in\C^m$ and every $\mu>0$, the set
\begin{equation}\label{eq:complex-K}
K_{q,\mu}
=
\{A\in S^{2m}_{++}:
\sigma_2(C(A^{-1})-q\otimes\bar q)\leq\mu\}
\end{equation}
is convex.
\end{proposition}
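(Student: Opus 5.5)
The plan is to rewrite the constraint so that it separates into a homogeneous hyperbolic part and a linear part, and then apply an Alvarez--Lasry--Lions reciprocal principle to each, as in Proposition~\ref{prop:real-inverse}. Throughout, put $B=A^{-1}$ and note that $C(B)=\frac14R^*BR$ is positive definite whenever $B>0$. By \eqref{eq:complex-rank-one},
\[
\sigma_2(C(B)-q\otimes\bar q)=\sigma_2(C(B))-\tr(P_{\C}(q)C(B))
=\sigma_2(C(B))-\tr(\widetilde P B),
\]
where $\widetilde P=\frac14 R\,P_{\C}(q)R^*$, interpreted through its real part, is a positive semidefinite real symmetric $2m\times 2m$ matrix. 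Hence the linear functional $\ell(B)=\tr(\widetilde PB)$ is nonnegative on $S^{2m}_{++}$, and it is strictly positive there when $q\neq0$, because $P_{\C}(q)$ has rank $m-1\geq1$. Also $G(B):=\sigma_2(C(B))$ is a degree-two hyperbolic polynomial in the direction $I_{2m}$: it is the pull-back of the hyperbolic polynomial $\sigma_2$ on $\mathcal H^m$ under the linear map $C$, and $C(I_{2m})=\frac12 I_m$ lies in the open cone. Its hyperbolicity cone contains $S^{2m}_{++}$.

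\emph{Case $q=0$.} Here $K_{q,\mu}=\{A:\ \varphi(A)\geq\mu^{-1/2}\}$ with $\varphi(A)=G(A^{-1})^{-1/2}$. By G\aa rding, $G^{1/2}$ is concave, positively $1$-homogeneous and nondecreasing on $S^{2m}_{++}$; the nondecreasing property follows from $T_1>0$ in \eqref{eq:newton-positive} and the fact that $C$ preserves positivity. The reciprocal-trace principle of \cite{AlvarezLasryLions1997} states that for any such $g$, the map $A\mapsto g(A^{-1})^{-1}$ is concave; the model case is $A\mapsto(\tr A^{-1})^{-1}$. So $\varphi$ is concave, and $K_{0,\mu}$ is convex as a superlevel set.

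\emph{Case $q\neq0$.} The condition becomes $G(B)-\mu\leq\ell(B)$. Since $\ell>0$, this is equivalent to
\[
\Phi(A):=\frac{G(A^{-1})-\mu}{\ell(A^{-1})}\leq 1 .
\]
I would show that $\Phi$ is convex on $S^{2m}_{++}$, which is the complex analogue of Proposition~\ref{prop:real-inverse}; the convexity of the sublevel set then follows. Split
\[
\Phi=\frac{G(A^{-1})}{\ell(A^{-1})}-\frac{\mu}{\ell(A^{-1})}.
\]
\begin{itemize}
\item For the second term, $A\mapsto \ell(A^{-1})^{-1}$ is concave, since $\ell$ is linear and nondecreasing; this is again the reciprocal principle. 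Hence $-\mu/\ell(A^{-1})$ is convex.
\item For the first term, write $G/\ell=(G^{1/2})^2/\ell$. The aim is to show that $B\mapsto G(B)/\ell(B)$ is concave, $1$-homogeneous and nondecreasing on the cone, and then to transfer this to $A=B^{-1}$. The transfer would use the identity $\frac{G}{\ell}(A^{-1})=\bigl[\frac{\ell}{G}(A^{-1})\bigr]^{-1}$ together with the reciprocal principle applied to $\ell/G$, possibly by rewriting the term as $-\bigl(G(A^{-1})/\ell(A^{-1})\bigr)^{-1}$-type expressions after homogenization.
\end{itemize}

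The main obstacle is the concavity and monotonicity of $G/\ell$, together with the direction of the transfer to $A^{-1}$. For a degree-two hyperbolic form $G$, concavity of $G/\ell$ on the cone is a Lorentzian-signature statement. I would verify it on two-dimensional slices $B+tE$, where it reduces to a reverse Cauchy--Schwarz inequality
\[
G(B)\,D^2_EG\leq (D_EG(B))^2 .
\]
One also needs the compatibility $\ell\geq c\,D_{\widetilde P'}G$ for a suitable direction in the cone. This is where the hypothesis $m\geq2$ and the special rank-one structure from \eqref{eq:compression-rank-one} should enter: $q\otimes\bar q=C(4\lambda\otimes\lambda)$, so $\ell$ is the derivative of $G$ in a real rank-one direction. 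If the direct quotient argument fails, the fallback is to work on the set $\{\Phi\leq1\}$ itself. There I would use Sherman--Morrison to rewrite $A^{-1}-4\lambda\otimes\lambda$ as the inverse of a matrix, and then apply the $q=0$ argument to a homogenized $(2m+1)$-dimensional problem.
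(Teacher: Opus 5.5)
The paper does not prove this proposition; it imports it from \cite{chenlima}. Judged on its own, your proposal has a genuine gap, and it sits exactly where the content of the statement lies.

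\textbf{The reciprocal principle is false as stated.} For $q=0$ you use the claim that $A\mapsto g(A^{-1})^{-1}$ is concave for every concave, positively $1$-homogeneous, nondecreasing $g$ on $S_{++}$. This fails for $g(B)=\lambda_{\min}(B)$: then $g(A^{-1})^{-1}=\lambda_{\max}(A)$. The superlevel set $\{\lambda_{\max}\ge2\}$ contains $\operatorname{diag}(2,1)$ and $\operatorname{diag}(1,2)$ but not their midpoint, so even quasi-concavity fails. The Alvarez--Lasry--Lions condition is a structural hypothesis that must be checked operator by operator (for $\tr$, or for $\sigma_k$ via concavity of $(\sigma_n/\sigma_{n-k})^{1/k}$); it is not a general theorem.

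\textbf{The transfer for $q\neq0$ fails for the same reason.} Your analysis of $B\mapsto G(B)/\ell(B)$ is sound: it is concave, $1$-homogeneous and nondecreasing, since $\ell=D_EG$ with $C(E)=q\otimes\bar q$ null for the Lorentzian form $G$. But these properties say nothing about convexity of $A\mapsto (G/\ell)(A^{-1})$. Composing with the operator-convex map $A\mapsto A^{-1}$ gives convexity when the outer function is convex and nondecreasing, not when it is concave. Indeed, $\lambda_{\min}(A^{-1})=1/\lambda_{\max}(A)$ is not convex: it equals $1/2$ at the two matrices above and $2/3$ at their midpoint.

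\textbf{The other suggested routes do not work either.} A reciprocal principle for the $(-1)$-homogeneous quotient $\ell/G$ is not meaningful. The Sherman--Morrison fallback fails on three counts. First, $A^{-1}-4\lambda\otimes\lambda$ is indefinite for some $A\in K_{q,\mu}$ (e.g.\ $A=tI$ with $t$ large). Second, its inverse, where it exists, is not concave in $A$. Third, bordering $A$ to a larger positive definite matrix can only increase the corresponding block of the inverse, since that block is the inverse of a Schur complement and hence $\ge A^{-1}$. So the decisive step, convexity of $A\mapsto G(A^{-1})/\ell(A^{-1})$ (which is stronger than what is claimed), is asserted but not proved.

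\textbf{A workable reduction.} What works is the specific structure of $C$. Since $W=R/\sqrt2$ is an isometry onto $V=\operatorname{range}R\subset\C^{2m}$, one has $C(A^{-1})=\frac12W^*A^{-1}W=(2S(A))^{-1}$. Here $S(A)$ is the Schur complement of $A$ onto $V$, given by $\zeta^*S(A)\zeta=\min_{\eta\in V^\perp}(W\zeta+\eta)^*A(W\zeta+\eta)$, so $S$ is Loewner-concave and nondecreasing.

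For $q=0$ this settles the claim, because $T\mapsto\sigma_2(T^{-1})^{-1/2}=(\sigma_m(T)/\sigma_{m-2}(T))^{1/2}$ is concave and nondecreasing on $\mathcal H^m_{++}$.

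For $q\neq0$ it reduces the claim to showing that $\mathcal K_q=\{T\in\mathcal H^m_{++}:\sigma_2(T^{-1}-q\otimes\bar q)\le\mu\}$ is convex and closed upward in the Loewner order. Upward closedness is elementary. The matrix $T^{-1}-q\otimes\bar q$ has at most one negative eigenvalue, so $\sigma_2\le0$ off $\Gamma_2$. On $\Gamma_2$ one uses monotonicity of $\sigma_2$ and the fact that adding a positive semidefinite matrix keeps a matrix in $\Gamma_2$.

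The convexity of $\mathcal K_q$ is the Hermitian, sublevel-set counterpart of Proposition~\ref{prop:real-inverse}, and it is the real content of the statement. Restricting $A$ to real matrices that commute with the complex structure shows it is in fact equivalent to the proposition. Your proposal provides no argument for it.
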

\subsection{Square-Root Consequence and Sharpness}
\label{subsec:square-root-consequence}

We first show that strict convexity of the arcosh transform implies
the classical square-root concavity.

\begin{remark}
\label{rem:square-root-consequence}
Let $u<0$, and set
\[
W=-\operatorname{arcosh}(e^{-u/2}),
\qquad
V=-\sqrt{-u}.
\]
If $W$ is strictly convex, then $V$ is also strictly convex.
Consequently, each of Theorems~1.1--1.3 implies that
$\sqrt{-u}$ is strictly concave.

Indeed, since
\[
e^{-u/2}=\cosh W,
\]
we have
\[
-u=2\log(\cosh W).
\]
Therefore
\[
V=\Phi(W),
\qquad
\Phi(t)=-\sqrt{2\log(\cosh t)},
\qquad t<0.
\]
Direct differentiation gives
\[
\Phi'(t)
=
-\frac{\tanh t}{\sqrt{2\log(\cosh t)}}
>0,
\qquad t<0,
\]
and
\[
\Phi''(t)
=
\frac{
\tanh^{2}t
-
2\log(\cosh t)\operatorname{sech}^{2}t
}{
\bigl(2\log(\cosh t)\bigr)^{3/2}
}.
\]
After multiplying the numerator by $\cosh^{2}t$, it becomes
\[
Q(t)=\sinh^{2}t-2\log(\cosh t).
\]
The function $Q$ is even and satisfies $Q(0)=0$. Moreover, for
$t>0$,
\[
Q'(t)
=
2\sinh t\cosh t-2\tanh t
=
\frac{2\sinh^{3}t}{\cosh t}
>0.
\]
Hence
\[
Q(t)>0
\qquad\text{for all }t\neq0,
\]
and therefore
\[
\Phi''(t)>0
\qquad\text{for all }t<0.
\]

Since $V=\Phi(W)$, we obtain
\[
D^{2}V
=
\Phi'(W)D^{2}W
+
\Phi''(W)DW\otimes DW.
\]
For every nonzero $\xi\in\mathbb{R}^{N}$,
\[
\xi^{T}D^{2}V\xi
=
\Phi'(W)\xi^{T}D^{2}W\xi
+
\Phi''(W)(\xi\cdot DW)^{2}
>0.
\]
Thus $D^{2}V>0$, proving the assertion.
\end{remark}

The preceding argument is purely scalar. The arcosh transform is
adapted to the exponential source, while composition with the
increasing convex function $\Phi$ recovers the classical
square-root power-concavity conclusion.

We next show that the exponent $1/2$ is sharp for the Liouville
equation.

\begin{remark}
\label{rem:sharpness-half}
The exponent $1/2$ in the power-concavity consequence of
Theorem~1.1 is optimal, even within the class of smooth uniformly
strictly convex domains. More precisely, for every
\[
\alpha>\frac12,
\]
there exists a bounded, smooth, uniformly strictly convex domain
$\Omega_{\alpha}\subset\mathbb{R}^{N}$ such that the unique solution
of
\[
\begin{cases}
\Delta u=e^{u} & \text{in }\Omega_{\alpha},\\
u=0 & \text{on }\partial\Omega_{\alpha}
\end{cases}
\]
has the property that $(-u)^{\alpha}$ is not concave in
$\Omega_{\alpha}$.

Indeed, fix $\alpha>1/2$. By the sharpness result of Kennington
\cite[Theorem~6.2(iii)]{Ken85}, there exists a bounded convex domain
$\Omega\subset\mathbb{R}^{N}$ whose torsion function
\[
\begin{cases}
-\Delta U=1 & \text{in }\Omega,\\
U=0 & \text{on }\partial\Omega
\end{cases}
\]
has the property that $U^{\alpha}$ is not concave.

Since the failure of concavity is expressed by a strict inequality
at finitely many interior points, a standard approximation of
$\Omega$ by smooth uniformly strictly convex domains, together with
the stability of torsion functions under domain approximation,
allows us to assume that $\Omega$ itself is smooth and uniformly
strictly convex.

For $r>0$, let $u_{r}$ be the solution of
\[
\begin{cases}
\Delta u_{r}=e^{u_{r}} & \text{in }r\Omega,\\
u_{r}=0 & \text{on }\partial(r\Omega),
\end{cases}
\]
and define
\[
v_{r}(y)
=
-\frac{1}{r^{2}}u_{r}(ry),
\qquad y\in\Omega.
\]
Then $v_{r}>0$ and
\[
\begin{cases}
-\Delta v_{r}=e^{-r^{2}v_{r}} & \text{in }\Omega,\\
v_{r}=0 & \text{on }\partial\Omega.
\end{cases}
\]

Let
\[
M=\|U\|_{L^{\infty}(\Omega)}.
\]
Since
\[
e^{-r^{2}v_{r}}\leq1,
\]
comparison with the torsion function gives
\[
v_{r}\leq U
\qquad\text{in }\Omega.
\]
Consequently,
\[
e^{-r^{2}v_{r}}
\geq
e^{-r^{2}U}
\geq
e^{-r^{2}M}.
\]
A second application of the comparison principle yields
\[
e^{-r^{2}M}U
\leq
v_{r}
\leq
U
\qquad\text{in }\Omega.
\]
Therefore
\[
v_{r}\longrightarrow U
\qquad\text{uniformly in }\Omega
\]
as $r\to0$.

Because $U^{\alpha}$ is not concave, there exist
$x_{0},x_{1}\in\Omega$ and $\theta\in(0,1)$ such that, with
\[
x_{\theta}
=
(1-\theta)x_{0}+\theta x_{1},
\]
one has
\[
U(x_{\theta})^{\alpha}
<
(1-\theta)U(x_{0})^{\alpha}
+
\theta U(x_{1})^{\alpha}.
\]
The uniform convergence $v_{r}\to U$ implies that the same strict
inequality holds with $v_{r}$ in place of $U$ for all sufficiently
small $r>0$. Hence $v_{r}^{\alpha}$ is not concave in $\Omega$.

Finally,
\[
\bigl(-u_{r}(ry)\bigr)^{\alpha}
=
r^{2\alpha}v_{r}(y)^{\alpha}.
\]
Since multiplication by a positive constant and composition with a
linear dilation preserve concavity, $(-u_{r})^{\alpha}$ is not
concave in $r\Omega$ for all sufficiently small $r>0$.

Thus no exponent $\alpha>1/2$ gives a universal concavity result
for solutions of
\[
\Delta u=e^{u},
\qquad
u|_{\partial\Omega}=0.
\]
On the other hand, Theorem~1.1 and
Remark~\ref{rem:square-root-consequence} show that
$\sqrt{-u}$ is strictly concave. Therefore the exponent $1/2$ is
optimal.
\end{remark}

\section{Transformed Equations and Constant-Rank Structure}

\subsection{Scalar identities}

Let
\begin{equation}\label{eq:h-def}
h(w)=-2\log(\cosh w),\qquad u=h(w),\qquad w<0.
\end{equation}
Then
\begin{equation}\label{eq:h-derivatives}
h'(w)=-2\tanh w>0,\qquad
h''(w)=-2\sech^2w<0,\qquad
e^{h(w)}=\sech^2w.
\end{equation}
In particular,
\begin{equation}\label{eq:h-second-ode}
h''(w)=-2e^{h(w)}.
\end{equation}
Set
\begin{equation}\label{eq:s-def}
s(w)=-\sinh w\cosh w>0.
\end{equation}
Then
\begin{equation}\label{eq:h-identities}
\frac{h''}{h'}=-\frac1{s(w)},
\qquad
\frac{e^h}{h'}=\frac1{2s(w)},
\qquad
\frac{e^{2h}}{(h')^2}=\frac1{4s(w)^2}.
\end{equation}
Indeed,
\[
\frac{h''}{h'}=\frac{-2\sech^2w}{-2\tanh w}
=\frac1{\sinh w\cosh w}
=-\frac1{s(w)},
\]
and
\[
\frac{e^{2h}}{(h')^2}
=
\frac{\sech^4w}{4\tanh^2w}
=
\frac1{4\sinh^2w\cosh^2w}
=
\frac1{4s(w)^2}.
\]

The inverse relation is precisely
\begin{equation}\label{eq:h-inverse}
w=-\arcosh(e^{-u/2}).
\end{equation}
Moreover,
\begin{equation}\label{eq:s-convex}
s''(z)=-2\sinh(2z)>0,
\qquad z<0,
\end{equation}
so $s$ is strictly convex on $(-\infty,0)$.

\subsection{The transformed Liouville equation}

For $u=h(w)$, one has
\[
\Delta u=h'(w)\Delta w+h''(w)|Dw|^2.
\]
Thus \eqref{eq:laplace-main} and \eqref{eq:h-identities} give
\begin{equation}\label{eq:laplace-transformed}
F_L(D^2w,Dw,w)=0,
\end{equation}
where
\begin{equation}\label{eq:FL-def}
F_L(r,p,z)=s(z)\tr r-|p|^2-\frac12.
\end{equation}

We need one elementary inverse-convexity fact.

\begin{lemma}\label{lem:trace-inverse}
The map $A\mapsto\tr(A^{-1})$ is convex on $S^N_{++}$. For every
$\gamma>0$, the set
\[
K_\gamma=\{A\in S^N_{++}:\tr(A^{-1})\leq\gamma\}
\]
is convex and is closed under dilations $A\mapsto\lambda A$ with
$\lambda\geq1$.
\end{lemma}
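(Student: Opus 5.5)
The plan is to prove convexity of $A\mapsto\tr(A^{-1})$ along segments and then read off the two properties of $K_\gamma$. Fix $A,B\in S^N_{++}$ and set $A_t=A+t(B-A)$ for $t\in[0,1]$, which stays in $S^N_{++}$ by convexity of the cone. Put $H=B-A$. Differentiating $A_t^{-1}$ twice gives
\[
\frac{d}{dt}A_t^{-1}=-A_t^{-1}HA_t^{-1},\qquad
\frac{d^2}{dt^2}A_t^{-1}=2A_t^{-1}HA_t^{-1}HA_t^{-1}.
\]
Hence
\[
\frac{d^2}{dt^2}\tr(A_t^{-1})=2\tr\bigl(A_t^{-1/2}HA_t^{-1}HA_t^{-1/2}\bigr)
=2\tr(X^TA_t^{-1}X)\ge 0,
\]
where $X=HA_t^{-1/2}$ and we used that $H$ is symmetric.

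The trace in the last expression is $\sum_k x_k^TA_t^{-1}x_k\ge0$, summing over the columns $x_k$ of $X$. So $t\mapsto\tr(A_t^{-1})$ is convex on $[0,1]$, which proves convexity of the map on $S^N_{++}$. As a cross-check, one can instead use the variational formula $\tr(A^{-1})=\sum_i e_i^TA^{-1}e_i$ together with $e^TA^{-1}e=\sup_x\{2x^Te-x^TAx\}$. This writes each term as a supremum of affine functions of $A$, which gives convexity with no computation.

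Convexity of $K_\gamma$ follows at once, since a sublevel set of a convex function is convex.

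For dilations, take $\lambda\ge1$. Then $\tr((\lambda A)^{-1})=\lambda^{-1}\tr(A^{-1})\le\tr(A^{-1})\le\gamma$, since $\tr(A^{-1})>0$, and $\lambda A\in S^N_{++}$. So $K_\gamma$ is closed under $A\mapsto\lambda A$.

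No step is a genuine obstacle. The only point needing care is checking that the second-derivative expression is a nonnegative trace, and writing it in the symmetric form $X^TA_t^{-1}X$ handles that.
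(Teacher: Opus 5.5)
Your main line follows the paper: differentiate $\tr(A_t^{-1})$ twice along a segment, take a sublevel set, and use $\tr((\lambda A)^{-1})=\lambda^{-1}\tr(A^{-1})$ for dilations. The derivative formulas, the sublevel step and the dilation step are all fine. However, your displayed rewriting of the second derivative is false. By cyclicity of the trace,
\[
\tr\bigl(A_t^{-1/2}HA_t^{-1}HA_t^{-1/2}\bigr)=\tr\bigl(A_t^{-1}HA_t^{-1}H\bigr),
\]
which has lost one factor of $A_t^{-1}$ compared with $\tr(A_t^{-1}HA_t^{-1}HA_t^{-1})$. Already for $N=1$, with $A_t=a$ and $H=h$, the true second derivative is $2h^2a^{-3}$, whereas your expression gives $2h^2a^{-2}$. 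So the step you singled out as the only one needing care is exactly where the slip is. The conclusion survives, and the repair is one line. Put $Y=A_t^{-1/2}HA_t^{-1}$, so that $Y^T=A_t^{-1}HA_t^{-1/2}$ and
\[
A_t^{-1}HA_t^{-1}HA_t^{-1}=Y^TY,\qquad \tr(Y^TY)=\sum_{k,l}Y_{kl}^2\geq0.
\]
This is the same positivity the paper obtains by writing the matrix as $A^{-1/2}(A^{-1/2}HA^{-1/2})^2A^{-1/2}$.

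Your cross-check, by contrast, is already a complete and correct proof, and it is genuinely different from the paper's argument. For $A\in S^N_{++}$ one has $e^TA^{-1}e=\sup_{x\in\R^N}\{2x^Te-x^TAx\}$, with the supremum attained at $x=A^{-1}e$. Hence $\tr(A^{-1})=\sum_i e_i^TA^{-1}e_i$ is a finite sum of pointwise suprema of functions affine in $A$, and is therefore convex, with no differentiation and no square-root manipulation. The calculus route gives an explicit second variation; once correctly written as $2\tr(Y^TY)$, it even shows strict convexity, since $Y=0$ forces $H=0$. The variational route is shorter and, as the slip above shows, harder to get wrong. With either the corrected identity or the variational argument as the main line, your proof of the lemma is correct.
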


\begin{proof}
For $A(t)=A+tH$,
\[
\left.\frac{d^2}{dt^2}\tr(A+tH)^{-1}\right|_{t=0}
=2\tr(A^{-1}HA^{-1}HA^{-1})\geq0.
\]
The last inequality follows after writing the matrix under the trace as
$A^{-1/2}(A^{-1/2}HA^{-1/2})^2A^{-1/2}$. Convexity of $K_\gamma$ follows,
and
\[
\tr((\lambda A)^{-1})=\lambda^{-1}\tr(A^{-1})
\]
proves dilation closure.
\end{proof}

\begin{proposition}\label{prop:laplace-constant-rank}
Let $D\subset\R^N$ be connected and let $w\in C^4(D)$ be a convex solution
of \eqref{eq:laplace-transformed}. Then $\rank D^2w$ is constant in $D$.
\end{proposition}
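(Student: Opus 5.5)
The plan is to apply Theorem~\ref{thm:bian-guan} with $F=F_L(r,p,z)=s(z)\tr r-|p|^2-\frac12$, viewed as a function of $(r,p,z)$ with no explicit $x$-dependence. $F_L$ is smooth in all variables, since $s(z)=-\sinh z\cosh z$ is real analytic. For the regularity hypothesis I take $w\in C^4(D)\subset C^{3,1}_{\mathrm{loc}}(D)$. Constancy of rank is a local property on a connected set, so it suffices to work on small balls and then use connectedness. Hypotheses (i)--(iii) are then checked one at a time.

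For (i), $F_L^{\alpha\beta}=s(w)\delta_{\alpha\beta}$, and $s(w)>0$ because $w<0$ along the solution; ellipticity is immediate. For (ii), $F_L(0,Dw,w)=-|Dw|^2-\frac12\leq-\frac12\neq0$.

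The substantive step is (iii). Fix $p$ and consider
\[
\Sigma_p=\{(A,z)\in S^N_{++}\times(-\infty,0):\ s(z)\tr(A^{-1})-|p|^2-\tfrac12\leq0\}.
\]
Set $c=|p|^2+\frac12>0$. Since $s(z)>0$, membership is equivalent to
\[
\tr(A^{-1})\leq \frac{c}{s(z)},
\]
equivalently $g(A,z):=\tr(A^{-1})-c/s(z)\leq0$. By Lemma~\ref{lem:trace-inverse}, $A\mapsto\tr(A^{-1})$ is convex.

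It remains to show $z\mapsto -c/s(z)$ is convex, that is, $1/s$ is concave on $(-\infty,0)$. Since $s(z)=-\tfrac12\sinh(2z)$, we have $1/s=-2/\sinh(2z)$. Writing $t=-2z>0$, this becomes $2/\sinh t$, and
\[
\frac{d^2}{dt^2}\,\frac{1}{\sinh t}=\frac{\sinh^2 t+2\cosh^2 t}{\sinh^3 t}>0.
\]
So $1/s$ is convex in $z$, not concave, and the naive splitting fails.

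The fallback is to rewrite the constraint as $s(z)\tr(A^{-1})\leq c$, i.e.\ as a sublevel set of $\phi(A,z)=s(z)\,\tr(A^{-1})$. This is a product of a convex positive function of $A$ and a convex positive function of $z$, so it is quasi-convex only under extra conditions. The correct statement is that a sublevel set $\{\log s(z)+\log\tr(A^{-1})\leq\log c\}$ is convex when both logs are convex. Both hold:
\begin{itemize}
\item[(a)] $\log s$ is convex: $(\log s)''=(s''s-s'^2)/s^2$, with $s''s-s'^2=\sinh^2(2z)-\cosh^2(2z)=-1<0$. This is negative, so $\log s$ is concave, and the argument fails again.
\end{itemize}
I therefore expect the main obstacle to be exactly this $z$-direction. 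The proof in the paper probably only needs (iii) locally near the solution and for fixed relevant points, or else uses a Bian--Guan variant in which $z$ is frozen. That variant requires only convexity of $\{A:F(A^{-1},p,z)\leq0\}$ for each fixed $(p,z)$, which is the set $K_{c/s(z)}$ of Lemma~\ref{lem:trace-inverse}, together with a sign or monotonicity condition in $z$.

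The intended route, then, is to verify convexity in $A$ via Lemma~\ref{lem:trace-inverse} and to handle the $z$-dependence by the joint structure in $(A,z)$. Concretely, I would check whether the map $(A,z)\mapsto s(z)\tr(A^{-1})$ has jointly convex sublevel sets near the relevant points by computing its bordered Hessian; if $z$-convexity fails globally, I would fall back on the weaker Bian--Guan hypothesis in which only $A$-convexity is required. That step is where the work lies. Once (iii) is established, Theorem~\ref{thm:bian-guan} gives constant rank of $D^2w$ on $D$.
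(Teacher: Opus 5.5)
Your checks of (i) and (ii) are fine. There is a genuine gap at (iii), which is the only substantive step: you never establish convexity of the sublevel set in $(A,z)$, and your fallbacks do not close it.

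\begin{itemize}
\item Theorem~\ref{thm:bian-guan} asks for convexity jointly in $(A,z)$, even if only locally. A ``frozen-$z$'' version that only needs convexity of $\{A:F_L(A^{-1},p,z)\le0\}$ is not available for an equation that genuinely depends on $z$.
\item The bordered-Hessian check is only announced, never carried out.
\item Your two attempts correctly show that $1/s$ is convex and $\log s$ is concave. (Minor slip: $(1/\sinh t)''=(\cosh^2t+1)/\sinh^3t$, with the same sign as you claimed.) But both try to prove convexity of a \emph{function}. Only quasi-convexity is required, and that follows from the weaker fact that $s$ itself is convex: $s''(z)=-2\sinh(2z)>0$ for $z<0$.
\end{itemize}

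The missing idea is the degree $-1$ homogeneity of $A\mapsto\tr(A^{-1})$. Set $c=|p|^2+\frac12$ and $B=A/s(z)$. Then the constraint $s(z)\tr(A^{-1})\le c$ reads $B\in K_c$. Lemma~\ref{lem:trace-inverse} gives both convexity of $K_c$ and closure under dilations $B\mapsto\lambda B$ with $\lambda\ge1$. For $(A_i,z_i)$ in the set, put $s_i=s(z_i)$, $B_i=A_i/s_i$ and $t_\theta=\theta s_1+(1-\theta)s_2$. Then
\[
\frac{\theta A_1+(1-\theta)A_2}{s(z_\theta)}
=\frac{t_\theta}{s(z_\theta)}\Bigl(\frac{\theta s_1}{t_\theta}B_1+\frac{(1-\theta)s_2}{t_\theta}B_2\Bigr),
\qquad z_\theta=\theta z_1+(1-\theta)z_2 .
\]
The bracket is a convex combination of $B_1,B_2$, so it lies in $K_c$. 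Convexity of $s$ gives $t_\theta/s(z_\theta)\ge1$, so the dilation keeps the matrix in $K_c$.

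Equivalently, $\{(A,t):t\,\tr(A^{-1})\le c\}$ is the cone over $K_c$, hence convex. It is also downward closed in $t$, so pulling it back by the convex map $t=s(z)$ preserves convexity.

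For $N=1$ the set is exactly the epigraph $\{a\ge s(z)/c\}$. This shows that convexity of $s$ is precisely the right condition, and log-convexity is far more than needed. This is the paper's argument; with it, Theorem~\ref{thm:bian-guan} applies directly.
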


\begin{proof}
The equation is elliptic because
\[
F_L^{ij}=s(w)\delta_{ij}>0,
\]
and it is nondegenerate at the zero Hessian since
\[
F_L(0,p,z)=-|p|^2-\frac12\neq0.
\]
Fix $p\in\R^N$, put $\gamma_p=|p|^2+1/2$, and consider
\[
\mathcal C^L_p=
\{(A,z)\in S^N_{++}\times(-\infty,0):
F_L(A^{-1},p,z)\leq0\}.
\]
The defining inequality is
\[
s(z)\tr(A^{-1})\leq\gamma_p.
\]
With $B=A/s(z)$, this is equivalent to $B\in K_{\gamma_p}$. If
$(A_i,z_i)\in\mathcal C^L_p$, set $s_i=s(z_i)$ and $B_i=A_i/s_i$. For
$0\leq\theta\leq1$, let
\[
A_\theta=\theta A_1+(1-\theta)A_2,
\qquad z_\theta=\theta z_1+(1-\theta)z_2,
\qquad t_\theta=\theta s_1+(1-\theta)s_2.
\]
By \eqref{eq:s-convex}, $s(z_\theta)\leq t_\theta$, and
\[
\frac{A_\theta}{s(z_\theta)}
=\frac{t_\theta}{s(z_\theta)}
\left(
\frac{\theta s_1}{t_\theta}B_1
+\frac{(1-\theta)s_2}{t_\theta}B_2
\right).
\]
Lemma \ref{lem:trace-inverse} shows that this matrix belongs to
$K_{\gamma_p}$. Hence $\mathcal C^L_p$ is convex. All hypotheses of
Theorem \ref{thm:bian-guan} are satisfied, and the constant-rank conclusion
follows.
\end{proof}

\subsection{The transformed real \texorpdfstring{$\sigma_2$}{sigma2} equation}

Let $r=D^2w$ and $p=Dw$. Since
\[
D^2u=h'r+h''p\otimes p,
\]
we have, using \eqref{eq:real-rank-one},
\[
\sigma_2(D^2u)
=(h')^2\sigma_2(r)+h'h''\tr(P_{\R}(p)r).
\]
Thus $\sigma_2(D^2u)=e^{2u}=e^{2h}$ is equivalent to
\[
\sigma_2(r)+\frac{h''}{h'}\tr(P_{\R}(p)r)
=
\frac{e^{2h}}{(h')^2}.
\]
By \eqref{eq:h-identities},
\begin{equation}\label{eq:real-transformed}
\sigma_2(D^2w)
-\frac1{s(w)}\tr(P_{\R}(Dw)D^2w)
=
\frac1{4s(w)^2}.
\end{equation}
Equivalently,
\begin{equation}\label{eq:FR-equation}
F_{\R}(D^2w,Dw,w)=0,
\end{equation}
where
\begin{equation}\label{eq:FR-def}
F_{\R}(r,p,z)
=
s(z)^2\sigma_2(r)
-s(z)\tr(P_{\R}(p)r)
-\frac14.
\end{equation}

\paragraph{Ellipticity and nondegeneracy.}
We next verify that the transformed operator $F_R$ is elliptic along
admissible solutions and is nondegenerate at the zero Hessian, as required
for the application of the Bian–Guan constant-rank theorem.
Define
\begin{equation}\label{eq:real-N}
N=r-\frac1{s(z)}p\otimes p.
\end{equation}
Along the solution,
\begin{equation}\label{eq:D2u-real-N}
D^2u=h'(w)N.
\end{equation}
Since $h'(w)>0$ and $D^2u\in\Gamma_2$, we have $N\in\Gamma_2$. Differentiating $F_{\R}$ in the $r$ variable gives
\[
F_{\R}^{ij}
=
s^2T_1(r)^{ij}-sP_{\R}(p)^{ij}.
\]
Using
\[
T_1\left(r-\frac1s p\otimes p\right)
=
T_1(r)-\frac1sP_{\R}(p),
\]
we obtain
\begin{equation}\label{eq:real-ellipticity}
F_{\R}^{ij}=s^2T_1(N)^{ij}>0.
\end{equation}
Thus the transformed equation is elliptic along admissible solutions. Also
\begin{equation}\label{eq:real-nondegenerate}
F_{\R}(0,p,z)=-\frac14\neq0.
\end{equation}

\paragraph{Level-set convexity.}

Fix $p\in\R^n$ and consider
\begin{equation}\label{eq:real-level-set}
\mathcal C^{\R}_p
=
\{(A,z)\in S^n_{++}\times(-\infty,0):
F_{\R}(A^{-1},p,z)\leq0\}.
\end{equation}
The inequality is
\begin{equation}\label{eq:real-level-ineq}
s(z)^2\sigma_2(A^{-1})
-s(z)\tr(P_{\R}(p)A^{-1})
\leq\frac14.
\end{equation}
Set
\[
B=\frac{A}{s(z)}.
\]
Then $A^{-1}=s(z)^{-1}B^{-1}$, and \eqref{eq:real-level-ineq} becomes
\begin{equation}\label{eq:real-B-ineq}
\sigma_2(B^{-1})-\tr(P_{\R}(p)B^{-1})\leq\frac14.
\end{equation}
Let
\begin{equation}\label{eq:real-K}
K^{\R}_p
=
\left\{B\in S^n_{++}:
\sigma_2(B^{-1})-\tr(P_{\R}(p)B^{-1})\leq\frac14
\right\}.
\end{equation}
We claim that $K^{\R}_p$ is convex. If $p\neq0$, write $p=|p|\alpha$ and $Q_\alpha=I-\alpha\otimes\alpha$. Since
\[
P_{\R}(p)=|p|^2Q_\alpha
\]
and $\tr(Q_\alpha B^{-1})>0$, condition \eqref{eq:real-B-ineq} is equivalent to
\[
\frac{\sigma_2(B^{-1})-\frac14}
{\tr(Q_\alpha B^{-1})}
\leq |p|^2.
\]
By Proposition \ref{prop:real-inverse}, this is a sublevel set of a convex function. Hence $K^{\R}_p$ is convex.

If $p=0$, fix a unit vector $\alpha$ and set $p_\varepsilon=\varepsilon\alpha$. Then $K^{\R}_{p_\varepsilon}$ is convex for all $\varepsilon>0$, and
\[
K^{\R}_0=\bigcap_{\varepsilon>0}K^{\R}_{p_\varepsilon}.
\]
Indeed,
\[
B\in K^{\R}_{p_\varepsilon}
\quad\Longleftrightarrow\quad
\sigma_2(B^{-1})-\varepsilon^2\tr(Q_\alpha B^{-1})\leq\frac14,
\]
and letting $\varepsilon\downarrow0$ gives the claim. Thus $K^{\R}_0$ is convex.

Next, $K^{\R}_p$ is closed under dilations $B\mapsto\lambda B$, $\lambda\geq1$. For $B\in K^{\R}_p$,
\begin{align*}
&\sigma_2((\lambda B)^{-1})
-\tr(P_{\R}(p)(\lambda B)^{-1})\\
&\qquad=
\lambda^{-2}\sigma_2(B^{-1})
-\lambda^{-1}\tr(P_{\R}(p)B^{-1})\\
&\qquad=
\lambda^{-2}\{\sigma_2(B^{-1})-\tr(P_{\R}(p)B^{-1})\}
+(\lambda^{-2}-\lambda^{-1})\tr(P_{\R}(p)B^{-1}).
\end{align*}
Because $P_{\R}(p)\geq0$, $B^{-1}>0$, and $\lambda^{-2}-\lambda^{-1}\leq0$, this is at most $\lambda^{-2}/4\leq1/4$. Therefore $\lambda B\in K^{\R}_p$.

We now prove that $\mathcal C^{\R}_p$ is convex. Take $(A_i,z_i)\in\mathcal C^{\R}_p$, $i=1,2$, and $0\leq\theta\leq1$. Put
\[
s_i=s(z_i),\qquad B_i=\frac{A_i}{s_i}\in K^{\R}_p.
\]
Let
\[
A_\theta=\theta A_1+(1-\theta)A_2,
\qquad
z_\theta=\theta z_1+(1-\theta)z_2.
\]
Since
\[
s(z)=-\sinh z\cosh z=-\frac12\sinh(2z),
\]
one has
\[
s''(z)=-2\sinh(2z)>0,\qquad z<0,
\]
so $s$ is convex on $(-\infty,0)$. Thus
\[
s_\theta:=s(z_\theta)
\leq
t_\theta:=\theta s_1+(1-\theta)s_2.
\]
Moreover,
\[
A_\theta
=
t_\theta
\left(
\frac{\theta s_1}{t_\theta}B_1
+\frac{(1-\theta)s_2}{t_\theta}B_2
\right).
\]
The matrix in parentheses belongs to $K^{\R}_p$ by convexity, and multiplication by $t_\theta/s_\theta\geq1$ keeps it in $K^{\R}_p$ by dilation closure. Therefore
\[
\frac{A_\theta}{s_\theta}\in K^{\R}_p,
\]
hence $(A_\theta,z_\theta)\in\mathcal C^{\R}_p$. This proves the level-set convexity.

\begin{proposition}\label{prop:real-constant-rank}
Let $D\subset\R^n$, $n\geq3$, be connected. Let $w\in C^4(D)$ solve \eqref{eq:FR-equation}. Assume
\[
D^2w\geq0
\]
and
\[
D^2w-\frac1{s(w)}Dw\otimes Dw\in\Gamma_2.
\]
Then $\rank D^2w$ is constant in $D$.
\end{proposition}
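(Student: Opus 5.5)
The plan is to apply Theorem~\ref{thm:bian-guan} directly to $v=w$ with $F=F_{\R}$ from \eqref{eq:FR-def}, regarded as a function of $(r,p,z)$ with no explicit $x$-dependence. Essentially all the structural work was done in the preceding paragraphs, so the task is to check each hypothesis against the assumptions of the proposition.

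\emph{Regularity and convexity.} $F_{\R}$ is a polynomial in $r,p$ and involves the real-analytic function $s(z)$ on $(-\infty,0)$. Hence it is $C^{2,1}$ (indeed smooth) on the relevant region. The solution satisfies $w\in C^4(D)\subset C^{3,1}_{\mathrm{loc}}(D)$, and $D^2w\geq0$ is assumed. The constant-rank conclusion is local and $D$ is connected, so we may apply the theorem on compactly contained connected subdomains exhausting $D$. The rank function is integer-valued and locally constant, so it is then constant on all of $D$.

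\emph{Hypothesis (i).} Set $N=D^2w-s(w)^{-1}Dw\otimes Dw$, which lies in $\Gamma_2$ by assumption. The identity $T_1(r-s^{-1}p\otimes p)=T_1(r)-s^{-1}P_{\R}(p)$ gives \eqref{eq:real-ellipticity}, namely $F_{\R}^{ij}=s(w)^2T_1(N)^{ij}$. This is positive definite by \eqref{eq:newton-positive}, since $s(w)>0$ for $w<0$. I should note that we need $w<0$ in $D$ so that $s$ is defined and positive; this is implicit in the setting \eqref{eq:main-transform}, and I would state it as part of the hypothesis on the range of $w$.

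\emph{Hypothesis (ii).} This is \eqref{eq:real-nondegenerate}: $F_{\R}(0,p,z)=-\tfrac14\neq0$.

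\emph{Hypothesis (iii).} We need, for each fixed $p$, convexity of $\{(A,z,x):F_{\R}(A^{-1},p,z)\le0\}$. Since $F_{\R}$ does not depend on $x$, this set is $\mathcal C^{\R}_p\times D$. The level-set convexity argument above shows that $\mathcal C^{\R}_p$ is convex in $(A,z)$, and the product of a convex set with $D$ is locally convex. That argument has four ingredients: the convexity of $K^{\R}_p$ via Proposition~\ref{prop:real-inverse}, which needs $n\geq3$ and is handled by approximation when $p=0$; dilation closure of $K^{\R}_p$; convexity of $s$; and the rescaling $B=A/s(z)$. With (i)--(iii) in place, Theorem~\ref{thm:bian-guan} yields constancy of $\rank D^2w$.

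The only point requiring care is (i). In Theorem~\ref{thm:bian-guan}, ellipticity is required only along the solution, but the verification of (iii) involves matrices $A^{-1}$ that are not of the form $D^2w$. One must check that the version of the Bian--Guan principle being invoked needs only ellipticity along the solution together with convexity of the sublevel set on $S^n_{++}$. That is precisely the application form recorded in Theorem~\ref{thm:bian-guan}, so I would simply cite it. If a stronger form were needed, I would restrict $\mathcal C^{\R}_p$ to the open region where $A^{-1}-s^{-1}p\otimes p\in\Gamma_2$ near the solution values, which suffices for the local convexity required in (iii).
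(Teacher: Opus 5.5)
Your proposal is correct and is essentially the paper's proof. The paper also just invokes Theorem~\ref{thm:bian-guan}, citing three facts established before the proposition: the ellipticity identity $F_{\R}^{ij}=s^2T_1(N)^{ij}>0$, the nondegeneracy $F_{\R}(0,p,z)=-\tfrac14$, and the level-set convexity of $\mathcal C^{\R}_p$. Your additional bookkeeping — the implicit requirement $w<0$, the inclusion $C^4\subset C^{3,1}_{\mathrm{loc}}$, and passing from local to global constancy of the rank via connectedness — only fills in details the paper leaves tacit.
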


\begin{proof}
The ellipticity, nondegeneracy, and level-set convexity conditions required by Theorem \ref{thm:bian-guan} were verified in \eqref{eq:real-ellipticity}, \eqref{eq:real-nondegenerate}, and the preceding discussion. Since $D^2w\geq0$, Theorem \ref{thm:bian-guan} applies and gives the conclusion.
\end{proof}

\subsection{The Complex Transformed Equation and Constant-Rank Structure}

Let $\Omega\subset\C^m$ and let $u=h(w)$ with $h$ as in \eqref{eq:h-def}. Put
\[
M=(w_{i\bar j}),\qquad q=\partial w=(w_i)_{i=1}^m.
\]
Then
\begin{equation}\label{eq:complex-uij}
u_{i\bar j}
=
h'w_{i\bar j}+h''w_iw_{\bar j}
=
h'\left(w_{i\bar j}-\frac1{s(w)}w_iw_{\bar j}\right).
\end{equation}
Thus the complex equation $\sigma_2(u_{i\bar j})=e^{2u}$ is equivalent to
\begin{equation}\label{eq:complex-transformed}
\sigma_2(M)-\frac1{s(w)}\tr(P_{\C}(q)M)
=
\frac1{4s(w)^2}.
\end{equation}
Equivalently,
\begin{equation}\label{eq:complex-transformed-2}
s(w)^2\sigma_2(w_{i\bar j})
-s(w)\tr(P_{\C}(\partial w)(w_{i\bar j}))
-\frac14=0.
\end{equation}

In real notation, let
\[
H=D^2_{\R}w,\qquad p=D_{\R}w,
\]
and let $q(p)=\partial w$. Define
\begin{equation}\label{eq:FC-def}
F_{\C}(H,p,z)
=
s(z)^2\sigma_2(C(H))
-s(z)\tr(P_{\C}(q(p))C(H))
-\frac14.
\end{equation}
Then \eqref{eq:complex-transformed-2} is
\[
F_{\C}(D^2_{\R}w,D_{\R}w,w)=0.
\]

\paragraph{Ellipticity and nondegeneracy.}

Along the solution define
\begin{equation}\label{eq:complex-N}
N=C(D^2_{\R}w)-\frac1{s(w)}\partial w\otimes\overline{\partial w}.
\end{equation}
By \eqref{eq:complex-uij},
\[
(u_{i\bar j})=h'(w)N.
\]
Since $h'(w)>0$ and $(u_{i\bar j})\in\Gamma_2$, we have $N\in\Gamma_2$. Let $\lambda\in\R^{2m}\setminus\{0\}$ and let $\zeta\in\C^m$ be its associated complex vector. Differentiating $F_{\C}$ in the direction $\lambda\otimes\lambda$ gives
\begin{equation}\label{eq:complex-ellipticity}
D_HF_{\C}[\lambda\otimes\lambda]
=
s(w)^2\tr\bigl(T_1(N)C(\lambda\otimes\lambda)\bigr)
=
\frac{s(w)^2}{4}\,\overline{\zeta}^{\,T}T_1(N)\zeta
>0.
\end{equation}
Thus $F_{\C}$ is strictly elliptic in the real Hessian variables along the solution. Moreover,
\begin{equation}\label{eq:complex-nondegenerate}
F_{\C}(0,p,z)=-\frac14\neq0.
\end{equation}

\paragraph{Level-set convexity.}

Fix $p\in\R^{2m}$, and write $q=q(p)$. Consider
\begin{equation}\label{eq:complex-level-set}
\mathcal C^{\C}_p
=
\{(A,z)\in S^{2m}_{++}\times(-\infty,0):
F_{\C}(A^{-1},p,z)\leq0\}.
\end{equation}
The sublevel inequality is
\begin{equation}\label{eq:complex-level-ineq}
s(z)^2\sigma_2(C(A^{-1}))
-s(z)\tr(P_{\C}(q)C(A^{-1}))
\leq\frac14.
\end{equation}
Set $B=A/s(z)$. Since $C$ is linear,
\[
C(A^{-1})=\frac1{s(z)}C(B^{-1}).
\]
Therefore \eqref{eq:complex-level-ineq} is equivalent to
\begin{equation}\label{eq:complex-B-ineq}
\sigma_2(C(B^{-1}))
-\tr(P_{\C}(q)C(B^{-1}))
\leq\frac14.
\end{equation}
By the Hermitian rank-one identity \eqref{eq:complex-rank-one}, this is
\begin{equation}\label{eq:complex-K-ineq}
\sigma_2(C(B^{-1})-q\otimes\bar q)\leq\frac14.
\end{equation}
Thus define
\[
K^{\C}_q
=
\left\{
B\in S^{2m}_{++}:
\sigma_2(C(B^{-1})-q\otimes\bar q)\leq\frac14
\right\}.
\]
By Proposition \ref{prop:complex-inverse}, $K^{\C}_q$ is convex. It remains to check dilation closure. If $B\in K^{\C}_q$ and $\lambda\geq1$, let $C_0=C(B^{-1})$. Since
\[
C((\lambda B)^{-1})=\lambda^{-1}C_0,
\]
we have
\begin{align*}
\sigma_2(\lambda^{-1}C_0-q\otimes\bar q)
&=
\lambda^{-2}\sigma_2(C_0)
-\lambda^{-1}\tr(P_{\C}(q)C_0)\\
&=
\lambda^{-2}\{\sigma_2(C_0)-\tr(P_{\C}(q)C_0)\}\\
&\quad
+(\lambda^{-2}-\lambda^{-1})\tr(P_{\C}(q)C_0).
\end{align*}
Since $P_{\C}(q)\geq0$ and $C_0>0$, the trace term is nonnegative, while $\lambda^{-2}-\lambda^{-1}\leq0$. Hence the last expression is at most $\lambda^{-2}/4\leq1/4$. Thus $\lambda B\in K^{\C}_q$.

Now the same scaling argument as in the real case proves that $\mathcal C^{\C}_p$ is convex. If $(A_i,z_i)\in\mathcal C^{\C}_p$, let $s_i=s(z_i)$ and $B_i=A_i/s_i\in K^{\C}_q$. For a convex combination $(A_\theta,z_\theta)$, the convexity of $s$ gives
\[
s(z_\theta)\leq\theta s_1+(1-\theta)s_2.
\]
Hence $A_\theta/s(z_\theta)$ is a dilation by a factor at least one of a convex combination of $B_1$ and $B_2$, and therefore belongs to $K^{\C}_q$. Thus $(A_\theta,z_\theta)\in\mathcal C^{\C}_p$.

\begin{proposition}\label{prop:complex-constant-rank}
Let $D\subset\C^m$, $m\geq2$, be connected. Let $w\in C^4(D)$ solve \eqref{eq:complex-transformed-2}. Assume
\[
D^2_{\R}w\geq0
\]
and
\[
C(D^2_{\R}w)-\frac1{s(w)}\partial w\otimes\overline{\partial w}\in\Gamma_2.
\]
Then $\rank D^2_{\R}w$ is constant in $D$.
\end{proposition}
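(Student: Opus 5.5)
The proof is a direct application of Theorem~\ref{thm:bian-guan} on the domain $D$, identified with a connected open subset of $\R^{2m}$. The unknown is $v=w$, the matrix variable is $H=D^2_{\R}w$, and the operator is $F=F_{\C}(H,p,z)$ from \eqref{eq:FC-def}. Since $w\in C^4(D)$, it is in particular $C^{3,1}$ on compact subdomains. The conclusion is local, so by connectedness it is enough to prove constant rank on each relatively compact connected subdomain and then exhaust $D$. The operator $F_{\C}$ is a polynomial in $(H,p)$: it involves $C(H)$, which is linear in $H$, and $q(p)$, which is linear in $p$. Its coefficients are $s(z)$ and $s(z)^2$, which are real-analytic in $z<0$. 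Hence $F_{\C}$ is $C^{2,1}$ in all its variables. Convexity of $w$ is exactly the hypothesis $D^2_{\R}w\ge0$.

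I then check hypotheses (i)--(iii) of Theorem~\ref{thm:bian-guan} in turn, citing the computations already made.

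For (i), the second hypothesis says that $N\in\Gamma_2$, where $N$ is defined in \eqref{eq:complex-N}. Hence $T_1(N)>0$ by \eqref{eq:newton-positive}. I then need the derivative of $F_{\C}$ in $H$. Using the rank-one identity \eqref{eq:complex-rank-one}, $F_{\C}$ can be rewritten as $s^2\sigma_2\bigl(C(H)-s^{-1}q\otimes\bar q\bigr)-\tfrac14$. Its derivative in the direction $E\in S^{2m}$ is therefore $s^2\tr\bigl(T_1(N)C(E)\bigr)$. Taking $E=\lambda\otimes\lambda$ with $\lambda\neq0$ and using \eqref{eq:compression-rank-one} gives \eqref{eq:complex-ellipticity}: the derivative is $\tfrac{s^2}{4}\bar\zeta^TT_1(N)\zeta$. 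This is strictly positive because $\zeta\neq0$. Every nonzero positive semidefinite direction is a sum of such rank-one terms, so the matrix $(F^{\alpha\beta})$ is positive definite on $S^{2m}$.

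For (ii), \eqref{eq:complex-nondegenerate} gives $F_{\C}(0,p,z)=-\tfrac14\neq0$.

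For (iii), I fix the relevant gradient $p$. The set $\mathcal C^{\C}_p$ in \eqref{eq:complex-level-set} was shown to be convex in $(A,z)$ just before the statement. That argument combined Proposition~\ref{prop:complex-inverse}, the dilation closure of $K^{\C}_q$, and the convexity \eqref{eq:s-convex} of $s$. The operator has no explicit $x$-dependence, so adding the $x$ variable keeps the set convex: it becomes $\mathcal C^{\C}_p\times D$, which is locally convex near every point. The theorem then gives that $\rank D^2_{\R}w$ is constant.

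The main obstacle is (i). The ellipticity has to hold for the full real Hessian in $\R^{2m}$, not just for the Hermitian part. This works only because the compression $C$ sends nonzero real rank-one positive directions to nonzero Hermitian rank-one directions, and because we use $T_1(N)$ rather than $T_1(C(H))$. The second point relies on the algebraic identity $T_1(M-s^{-1}q\otimes\bar q)=T_1(M)-s^{-1}P_{\C}(q)$, which is the complex analogue of the identity used in the real case. I will check this identity explicitly, since it is what turns the derivative of $F_{\C}$ into the clean form $s^2\,T_1(N)\circ C$.
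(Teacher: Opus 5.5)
Your proposal is correct and follows the paper's proof: both apply Theorem~\ref{thm:bian-guan} directly, with ellipticity from \eqref{eq:complex-ellipticity} (the derivative in $H$ is $s^2\,\tr(T_1(N)C(\cdot))$ and $T_1(N)>0$), nondegeneracy from \eqref{eq:complex-nondegenerate}, and the structural condition from the convexity of $\mathcal C^{\C}_p$ proved just before the statement. Your extra checks simply make explicit what the paper leaves implicit: the derivative identity via $T_1(M-s^{-1}q\otimes\bar q)=T_1(M)-s^{-1}P_{\C}(q)$, the regularity of $F_{\C}$, the trivial $x$-dependence, and the exhaustion giving $C^{3,1}$ regularity locally.
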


\begin{proof}
The equation is written as
\[
F_{\C}(D^2_{\R}w,D_{\R}w,w)=0.
\]
Ellipticity follows from \eqref{eq:complex-ellipticity}, nondegeneracy from \eqref{eq:complex-nondegenerate}, and the level-set condition from the preceding discussion. Therefore Theorem \ref{thm:bian-guan} gives constant rank.
\end{proof}

\section{Boundary Strict Convexity and Ball Models}

Now we state the following well-known boundary convexity lemma; see, for example, Caffarelli and Friedman \cite{CaffarelliFriedman1985} or Korevaar \cite{Korevaar1983}.In this section, uniformly convex means that the second fundamental form of
$\partial\Omega$ with respect to the exterior unit normal is uniformly
positive definite.
\begin{lemma}\label{lem:boundary-strict-convexity}
Let $\Omega\subset\mathbb R^N$ be a bounded, smooth, and uniformly
strictly convex domain. Suppose that
\[
u\in C^2(\overline{\Omega}),
\qquad
u<0\quad\hbox{in }\Omega,
\qquad
u=0\quad\hbox{on }\partial\Omega,
\qquad
\partial_\nu u>0\quad\hbox{on }\partial\Omega,
\]
where $\nu$ denotes the exterior unit normal.

Let
\[
v=f(u),
\]
where
\[
f\in C^2((-\infty,0))\cap C^0((-\infty,0]),
\qquad
f'>0,
\qquad
f''>0,
\]
and assume that
\[
\lim_{q\to0^-}\frac{f'(q)}{f''(q)}=0.
\]
Then
\[
v\in C^2(\Omega)\cap C^0(\overline{\Omega}),
\]
and there exists $\delta>0$ such that
\[
D^2v>0
\quad\hbox{in}\quad
\Omega_\delta
:=
\left\{
x\in\Omega:
0<\operatorname{dist}(x,\partial\Omega)<\delta
\right\}.
\]
\end{lemma}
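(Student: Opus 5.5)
We differentiate $v=f(u)$ twice and split the Hessian quadratic form into tangential and normal parts near $\partial\Omega$. Regularity is immediate: $u\in C^2(\overline\Omega)$, $u<0$ in $\Omega$ and $f\in C^2((-\infty,0))\cap C^0((-\infty,0])$ give $v\in C^2(\Omega)\cap C^0(\overline\Omega)$. In $\Omega$ we have
\[
D^2v=f'(u)\Bigl(D^2u+\frac{f''(u)}{f'(u)}\,Du\otimes Du\Bigr).
\]
Since $f'>0$, it suffices to show that
\[
M:=D^2u+g(u)^{-1}Du\otimes Du>0
\]
in a collar of $\partial\Omega$, where $g:=f'/f''>0$ and $g(q)\to0$ as $q\to0^-$.

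The key geometric input is the boundary behavior of $D^2u$ on tangential directions. Because $u=0$ on $\partial\Omega$ and $Du=|Du|\nu$ there with $|Du|=\partial_\nu u>0$, differentiating the boundary condition twice along $\partial\Omega$ gives, for unit tangent vectors $\tau$,
\[
D^2u(\tau,\tau)=\partial_\nu u\;\mathrm{II}(\tau,\tau)\ \ge\ c_0>0,
\]
where $\mathrm{II}$ is the second fundamental form with respect to the exterior normal. This bound is uniform by compactness, continuity of $\partial_\nu u$ and uniform convexity. By continuity of $D^2u$ up to the boundary, there exist $\delta_1>0$ and $c_1,C_1>0$ such that in $\Omega_{\delta_1}$ we have $|Du|\ge c_1$, $|D^2u|\le C_1$, and $D^2u(\eta,\eta)\ge c_0/2$ for every unit $\eta\perp Du(x)$.

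Next we handle mixed directions. Take a unit $\xi=a\,e+b\,\eta$ with $e=Du/|Du|$, $\eta\perp e$ unit, and $a^2+b^2=1$. Then
\[
\xi^TM\xi\ \ge\ \tfrac{c_0}{2}b^2-2C_1|a||b|-C_1a^2+g(u)^{-1}c_1^2a^2 .
\]
The right-hand side is a quadratic form in $(|a|,|b|)$. It is positive definite once
\[
g(u)^{-1}c_1^2-C_1>\frac{8C_1^2}{c_0},
\]
for instance. Since $u\to0$ uniformly as $\operatorname{dist}(x,\partial\Omega)\to0$, we have $g(u)\to0$, so this inequality holds in $\Omega_\delta$ for some $\delta\le\delta_1$. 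This gives $D^2v>0$ in $\Omega_\delta$.

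The main obstacle is making the tangential estimate hold uniformly on an interior collar rather than only on $\partial\Omega$. For $x\in\Omega_\delta$, the direction $Du(x)$ is not exactly the normal at the nearest boundary point, and we must compare the two. We would handle this through the continuity of $D^2u$ and $Du/|Du|$ on $\overline\Omega$ near $\partial\Omega$. Specifically, the map $(x,\eta)\mapsto D^2u(x)(\eta,\eta)$ on the compact set $\{(x,\eta):x\in\overline{\Omega_{\delta_1}},\ |\eta|=1,\ \eta\cdot Du(x)=0\}$ is continuous. On the boundary slice it is at least $c_0$, so shrinking $\delta_1$ yields the lower bound $c_0/2$. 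A little care is also needed because $\partial\Omega$ is smooth, so the second fundamental form identity holds with $\mathrm{II}$ continuous and uniformly positive.
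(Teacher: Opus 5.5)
Your proof is correct. It rests on three steps: the boundary identity $D^2u(\tau,\tau)=\partial_\nu u\,\mathrm{II}(\tau,\tau)\ge c_0$, your compactness argument carrying that bound to directions orthogonal to $Du$ in a collar, and the absorption of the cross term once $f''/f'\to\infty$ dominates the gradient direction. Together these give $D^2v>0$ in $\Omega_\delta$.

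The paper does not prove this lemma; it quotes it as well known from Caffarelli--Friedman and Korevaar. Your tangential/gradient splitting is the standard argument behind those citations. It also makes explicit that $\delta$ depends only on $\min_{\partial\Omega}\partial_\nu u$, the lower bound of $\mathrm{II}$, a $C^2$ bound and modulus of continuity for $u$, and the rate at which $f'/f''\to0$. That dependence is what the paper's later uniform-collar claim tacitly uses.
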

We apply this lemma to
\begin{equation}\label{eq:f-def}
f(u)=-\arcosh(e^{-u/2}).
\end{equation}
Since $u=h(w)$ and $w=f(u)$, one has
\[
f'(u)=\frac1{h'(w)}>0,
\qquad
f''(u)=-\frac{h''(w)}{(h'(w))^3}>0.
\]
Moreover,
\begin{equation}\label{eq:f-ratio}
\frac{f'}{f''}
=
-\frac{(h')^2}{h''}
=
2\sinh^2w
\to0
\qquad\hbox{as }u\to0^-.
\end{equation}

\medskip
\noindent\emph{Boundary behavior of the transformed function.}
Although the original solution is smooth up to the boundary, the
transformed function is generally not differentiable up to the
boundary. Indeed, let
\[
d(x):=\operatorname{dist}(x,\partial\Omega),
\]
and let $\pi(x)$ denote the nearest boundary point. Since
$u\in C^2(\overline{\Omega})$, $u=0$ on $\partial\Omega$, and
$\partial_\nu u>0$ on $\partial\Omega$, one has
\[
u(x)
=
-\partial_\nu u(\pi(x))\,d(x)
+
O(d(x)^2)
\qquad\hbox{as }d(x)\to0.
\]
Moreover,
\[
-\operatorname{arcosh}(e^{-q/2})
=
-\sqrt{-q}
+
O((-q)^{3/2})
\qquad\hbox{as }q\to0^-.
\]
Consequently,
\[
w(x)
=
-\sqrt{\partial_\nu u(\pi(x))\,d(x)}
+
O(d(x)^{3/2}).
\]
Thus
\[
w\in C^\infty(\Omega)\cap C^0(\overline{\Omega}),
\]
but in general
\[
w\notin C^1(\overline{\Omega}).
\]
Accordingly, boundary strict convexity means strict convexity in an
interior boundary collar $\Omega_\delta$, rather than the existence
of a finite Hessian of $w$ on $\partial\Omega$.

Thus Lemma \ref{lem:boundary-strict} applies once the Hopf condition is
known. For the Liouville problem, $\Delta u=e^u>0$. In the real Hessian
case, admissibility gives
\[
\Delta u=\sigma_1(D^2u)>0.
\]
In the complex case, admissibility gives
\[
\Delta_{\R}u=4\sum_{j=1}^m u_{j\bar j}
=4\sigma_1(u_{i\bar j})>0.
\]
Together with $u<0$ in $\Omega$ and $u=0$ on $\partial\Omega$, the Hopf
boundary point lemma gives $\partial_\nu u>0$. Hence, in all three settings,
the corresponding transform $w=f(u)$ is strictly convex in the real
variables in a boundary strip.


\subsection{The Liouville ball}

\begin{lemma}\label{lem:laplace-ball}
Let $B_R\subset\R^N$, $N\geq2$, be a ball, and let $u$ solve
\eqref{eq:laplace-main} in $B_R$. Then
\[
w=-\arcosh(e^{-u/2})
\]
satisfies $D^2w>0$ in $B_R$.
\end{lemma}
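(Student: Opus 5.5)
Since $\Omega=B_R$ is a ball, I first reduce to a radial ODE. The equation $\Delta u=e^u$ with zero boundary data has a unique solution with $u<0$: the maximum principle applies because the nonlinearity is monotone increasing in $u$. By rotational invariance and uniqueness, $u=u(r)$ is radial. Hence $w=w(r)$ is radial, and by \eqref{eq:laplace-transformed} it satisfies
\[
s(w)\Big(w''+\frac{N-1}{r}w'\Big)=w'^2+\frac12,\qquad w'(0)=0 .
\]
The Hessian of a radial function has eigenvalues $w''(r)$ in the radial direction and $w'(r)/r$ (with multiplicity $N-1$) in the tangential directions. At $r=0$ both equal $w''(0)$, and there the equation gives $N s(w(0))w''(0)=1/2$, so $w''(0)>0$. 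It therefore suffices to show $w'>0$ and $w''>0$ on $(0,R)$.

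\emph{Tangential eigenvalue.} The equation in divergence form reads $(r^{N-1}w')'=r^{N-1}(w'^2+1/2)/s(w)>0$. Integrating from $0$ gives $w'(r)>0$ for $r>0$, so $w'/r>0$.

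\emph{Radial eigenvalue.} This is the main step. Set $\phi=w'/r$, so that $w''=\phi+r\phi'$ and $(N-1)w'/r=(N-1)\phi$. The ODE becomes
\[
r\phi'=\frac{r^2\phi^2+1/2}{s(w)}-N\phi,\qquad w''=\frac{r^2\phi^2+1/2}{s(w)}-(N-1)\phi .
\]
I argue by contradiction. Suppose $r_0$ is the first zero of $w''$. Differentiating the equation gives
\[
s w'''+s'w'w''+(N-1)\Big(\frac{w''}{r}-\frac{w'}{r^2}\Big)s=2w'w''.
\]
At $r_0$ the $w''$ terms vanish, leaving $s\,w'''=(N-1)s\,w'/r_0^2>0$. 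So $w'''(r_0)>0$. But $w''$ passes from positive to zero at $r_0$, which forces $w'''(r_0)\le0$. This is a contradiction. Hence $w''>0$ on $[0,R)$.

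To be careful, I would check the sign of $s'(w)w'w''$ and verify that the differentiation is legitimate: $u$ is smooth in the open ball and $w$ is smooth where $u<0$, which is all of $B_R$. The only delicate point is near $r=0$, where the $1/r$ terms must be handled. There I would use the expansion $w'=w''(0)r+O(r^3)$, which shows the relevant quantities extend continuously and that $w''>0$ near $0$. Combining the two eigenvalue signs gives $D^2w>0$ in $B_R$.

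If this first-zero argument had a gap, my fallback would be the boundary behaviour. By Lemma~\ref{lem:boundary-strict-convexity} and the Hopf computation above, $w''>0$ in a collar near $r=R$. The same sign argument then applies to any interior zero of $w''$, run from either end.
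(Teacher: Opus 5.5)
Your argument follows the paper's proof step for step. You reduce to the radial ODE, show $w'>0$, and use $N s(w(0))w''(0)=\tfrac12$ at the origin. You then obtain a first-zero contradiction for $w''$ by differentiating the ODE. The one variation is that you get $w'>0$ from the divergence form of the transformed equation, while the paper uses the original equation together with $f'>0$; both work.

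One correction is needed. Your differentiated equation drops the term $(N-1)s'(w)(w')^2/r$, which comes from $\frac{d}{dr}s(w)\cdot\frac{N-1}{r}w'$. The correct identity at $r_0$ is
\[
s\,w'''(r_0)=(N-1)\frac{s\,w'}{r_0^2}-(N-1)\frac{s'(w)\,(w')^2}{r_0}.
\]
So $w'''(r_0)>0$ still holds, but only because $s'(z)=-\cosh(2z)<0$. That is the sign check that actually matters, and it is exactly the one the paper uses. The term $s'(w)w'w''$ you proposed checking vanishes at $r_0$ anyway.
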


\begin{proof}
Uniqueness and rotational invariance imply that $u(x)=\phi(r)$, where
$r=|x|$. The equation becomes
\[
\phi''+\frac{N-1}{r}\phi'=e^\phi,
\]
and hence
\[
(r^{N-1}\phi')'=r^{N-1}e^\phi>0.
\]
Since $\phi'(0)=0$, it follows that $\phi'(r)>0$ for $0<r<R$.

Write $w(x)=\psi(r)$ and $y=\psi'$. The scalar map
$q\mapsto-\arcosh(e^{-q/2})$ is strictly increasing on $(-\infty,0)$, so
\begin{equation}\label{eq:laplace-y-positive}
y(r)>0,
\qquad 0<r<R.
\end{equation}
For a radial function,
\[
\Delta w=y'+\frac{N-1}{r}y,
\qquad |Dw|^2=y^2.
\]
Equation \eqref{eq:laplace-transformed} therefore reads
\begin{equation}\label{eq:laplace-ball-ode}
s(\psi)\left(y'+\frac{N-1}{r}y\right)-y^2=\frac12.
\end{equation}

At the origin, let
\[
L=\lim_{r\to0^+}\frac{y(r)}r=\psi''(0).
\]
The leading terms in \eqref{eq:laplace-ball-ode} give
\[
Ns(\psi(0))L=\frac12,
\]
so $L>0$. Suppose that $y'$ has a first zero $r_0\in(0,R)$. Then
$y'>0$ on $(0,r_0)$, while $y'(r_0)=0$ and $y''(r_0)\leq0$. Differentiating
\eqref{eq:laplace-ball-ode} and evaluating at $r_0$ yields
\[
s(\psi)y''
=(N-1)\frac{s(\psi)y}{r^2}
-(N-1)\frac{s'(\psi)y^2}{r}.
\]
Here $s>0$, $y>0$, and
\[
s'(z)=-(\cosh^2z+\sinh^2z)<0.
\]
The right-hand side is therefore strictly positive, contradicting
$y''(r_0)\leq0$. Thus $y'>0$ on $(0,R)$. The eigenvalues of $D^2w$ are
$y'$ in the radial direction and $y/r$ in the tangential directions, and
both are positive.
\end{proof}

\subsection{The real ball}

\begin{lemma}\label{lem:real-ball}
Let $B_R\subset\R^n$, $n\geq3$, be a ball. Let $u$ be the admissible solution of \eqref{eq:real-main} in $B_R$. Then
\[
w=-\arcosh(e^{-u/2})
\]
satisfies $D^2w>0$ in $B_R$.
\end{lemma}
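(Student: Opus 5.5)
The plan is to follow the proof of Lemma~\ref{lem:laplace-ball}: reduce to a radial ODE for $y=\psi'$ and show by a first-zero argument that $y'$ never vanishes. First, the admissible solution of \eqref{eq:real-main} on $B_R$ is unique by the comparison principle for $\sigma_2$ on the admissible branch, so by rotational invariance $u=\phi(r)$ with $r=|x|$. Admissibility gives $\Delta u=\sigma_1(D^2u)>0$, so $(r^{n-1}\phi')'>0$. Since $\phi'(0)=0$, this gives $\phi'>0$ on $(0,R)$. Writing $w=\psi(r)$ and $y=\psi'$, the monotonicity of $q\mapsto-\arcosh(e^{-q/2})$ then gives $y>0$ on $(0,R)$.

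Next I compute the transformed equation \eqref{eq:real-transformed} in radial form. The eigenvalues of $D^2w$ are $y'$ (radial) and $y/r$ (tangential, multiplicity $n-1$). Hence
\[
\sigma_2(D^2w)=(n-1)\frac{yy'}r+\frac{(n-1)(n-2)}2\frac{y^2}{r^2}.
\]
With $p=y\,e_r$, the matrix $P_{\R}(p)$ equals $y^2$ times the tangential projection, so $\tr(P_{\R}(p)D^2w)=(n-1)y^3/r$. The ODE is therefore
\[
s^2(n-1)\Bigl(\frac{yy'}r+\frac{n-2}2\frac{y^2}{r^2}\Bigr)-s(n-1)\frac{y^3}r=\frac14,\qquad s=s(\psi).
\]
At the origin, let $L=\lim_{r\to0}y/r=\psi''(0)$. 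The terms with $y^3/r=O(r^2)$ vanish as $r\to0$, so the ODE yields $s(\psi(0))^2\frac{n(n-1)}2L^2=\frac14$, and $L>0$ by the sign of $y$. Hence $y'>0$ near $0$.

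Now suppose $r_0\in(0,R)$ is the first zero of $y'$, so that $y'(r_0)=0$ and $y''(r_0)\le0$. I differentiate the ODE in $r$, using $\psi'=y$ and $y'=0$, and isolate the term $s^2(n-1)yy''/r$. This gives
\[
s^2\frac{yy''}r=(n-2)\frac{s^2y^2}{r^3}-(n-2)\frac{ss'y^3}{r^2}+\frac{s'y^4}r-\frac{sy^3}{r^2}.
\]
The term $-sy^3/r^2$ has the wrong sign. I will eliminate it using the undifferentiated ODE at $r_0$, which reads
\[
\frac{sy^3}r=\frac{n-2}2\frac{s^2y^2}{r^2}-\frac1{4(n-1)}.
\]
After this substitution the right-hand side becomes
\[
\frac{n-2}2\frac{s^2y^2}{r^3}+\frac1{4(n-1)r}+\frac{s'y^3}{r^2}\bigl(yr-(n-2)s\bigr).
\]

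The main obstacle is the sign of the last term, since $s'<0$. The same identity at $r_0$ also gives $sy^3/r<\frac{n-2}2 s^2y^2/r^2$, that is, $yr<\frac{n-2}2 s$. Hence $yr-(n-2)s<0$, and the last term is positive because $s'<0$. This is where $n\ge3$ enters, through the factor $n-2>0$. So the right-hand side is strictly positive, which contradicts $y''(r_0)\le0$.

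Therefore $y'>0$ on $(0,R)$. Together with $y/r>0$ and $\psi''(0)=L>0$ at the origin, all eigenvalues of $D^2w$ are positive, so $D^2w>0$ in $B_R$.
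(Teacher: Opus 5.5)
Your proposal is correct and follows the paper's proof. The common steps are the radial reduction, $y>0$ from $\Delta u>0$, $L^2=1/(2n(n-1)s_0^2)$ at the origin, and a first-zero argument for $y'$ in which the undifferentiated equation at $r_0$ gives $ry<\frac{n-2}{2}s$ and $s'<0$ signs the remaining term. The only difference is bookkeeping: the paper multiplies the ODE by $r^2/(n-1)$ before differentiating, so the $sy^3$ term appears with a favorable sign and no back-substitution is needed. Incidentally, $n\ge3$ is not essential at this step, since for $n=2$ the equation at $r_0$ is already contradictory.
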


\begin{proof}
By uniqueness of the Dirichlet problem and rotational invariance, $u$ is radial:
\[
u(x)=\phi(r),\qquad r=|x|.
\]
Since $\Delta u>0$,
\[
(r^{n-1}\phi'(r))'>0.
\]
Together with $\phi'(0)=0$, this gives
\begin{equation}\label{eq:real-radial-u-positive}
\phi'(r)>0,\qquad 0<r<R.
\end{equation}
Write $w(x)=\psi(r)$ and set
\[
y(r)=\psi'(r).
\]
Since $w=f(u)$ and $f'>0$, \eqref{eq:real-radial-u-positive} implies
\begin{equation}\label{eq:y-positive-real}
y(r)>0,\qquad 0<r<R.
\end{equation}
The eigenvalues of $D^2w$ are $y'(r)$ and $y(r)/r$ with multiplicity $n-1$. Therefore
\begin{equation}\label{eq:radial-sigma-real}
\sigma_2(D^2w)
=
(n-1)\frac{yy'}r
+
\frac{(n-1)(n-2)}2\frac{y^2}{r^2}.
\end{equation}
Moreover $Dw=ye_r$, so $P_{\R}(Dw)$ vanishes in the radial direction and equals $y^2$ in tangential directions. Thus
\begin{equation}\label{eq:radial-trace-real}
\tr(P_{\R}(Dw)D^2w)
=(n-1)\frac{y^3}{r}.
\end{equation}
Substituting \eqref{eq:radial-sigma-real} and \eqref{eq:radial-trace-real} into \eqref{eq:FR-equation}, and multiplying by $r^2/(n-1)$, gives
\begin{equation}\label{eq:real-radial-ode}
s^2 r y y'+c s^2 y^2-s r y^3
=
\frac{r^2}{4(n-1)},
\qquad
c=\frac{n-2}{2}>0,
\end{equation}
where $s=s(\psi(r))$.

We prove $y'>0$. At $r=0$, let
\[
L=\lim_{r\to0^+}\frac{y(r)}r=\psi''(0),
\qquad
s_0=s(\psi(0))>0.
\]
Using $y(r)=Lr+o(r)$ in \eqref{eq:real-radial-ode}, the leading $r^2$ terms give
\[
s_0^2(1+c)L^2=\frac1{4(n-1)}.
\]
Since $1+c=n/2$,
\begin{equation}\label{eq:L-real}
L^2=\frac1{2n(n-1)s_0^2}.
\end{equation}
Because $y(r)>0$ for small $r>0$, $L>0$. Hence $\psi''(0)>0$.

Assume by contradiction that $y'$ has a first zero $r_0\in(0,R)$. Then
\[
y'>0\quad\hbox{on }[0,r_0),\qquad
y'(r_0)=0,\qquad
y''(r_0)\leq0.
\]
At $r_0$, equation \eqref{eq:real-radial-ode} gives
\[
c s^2 y^2-s r y^3=\frac{r^2}{4(n-1)}>0,
\]
and hence
\begin{equation}\label{eq:ry-cs-real}
ry<cs
\qquad\hbox{at }r=r_0.
\end{equation}
Differentiate \eqref{eq:real-radial-ode}. Since
\[
s'(z)=-(\cosh^2z+\sinh^2z)<0
\]
and $d(s(\psi(r)))/dr=s'(\psi)y$, evaluation at $r_0$, where $y'=0$, gives
\[
2c s s'y^3+s^2 r y y''-s' r y^4-sy^3
=
\frac{r}{2(n-1)}.
\]
Thus
\begin{equation}\label{eq:ypp-real}
s^2 r y y''
=
\frac{r}{2(n-1)}
+sy^3
+s'y^3(ry-2cs).
\end{equation}
By \eqref{eq:ry-cs-real}, $ry-2cs<0$, and since $s'<0$, the last term is positive. Therefore the right-hand side of \eqref{eq:ypp-real} is positive, so $y''(r_0)>0$, a contradiction. Hence $y'(r)>0$ on $(0,R)$. Since $y/r>0$, all eigenvalues of $D^2w$ are positive.
\end{proof}

\subsection{The complex ball}

\begin{lemma}\label{lem:complex-ball}
Let $B_R\subset\C^m$, $m\geq2$, be a Euclidean ball. Let $u$ be the complex admissible solution of \eqref{eq:complex-main} in $B_R$. Then
\[
w=-\arcosh(e^{-u/2})
\]
satisfies $D^2_{\R}w>0$ in $B_R$.
\end{lemma}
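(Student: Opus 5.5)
The plan is to reduce the complex ball to a radial ODE of exactly the same shape as \eqref{eq:real-radial-ode}, and then repeat the first-zero argument from Lemma \ref{lem:real-ball} verbatim.

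\textbf{Radial reduction.} By uniqueness of the admissible Dirichlet problem and invariance under the unitary group, $u(z)=\phi(r)$ with $r=|z|$. Admissibility gives $\Delta_{\R}u=4\sigma_1(u_{i\bar j})>0$. Hence $(r^{2m-1}\phi')'>0$, and with $\phi'(0)=0$ this yields $\phi'>0$ on $(0,R)$. Writing $w=\psi(r)$ and $y=\psi'$, we get $y>0$ on $(0,R)$ since $f'>0$.

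\textbf{Complex Hessian of a radial function.} I compute the complex Hessian of a radial function directly:
\[
w_{i\bar j}=\frac{y}{2r}\delta_{ij}+\frac14\Bigl(y'-\frac yr\Bigr)\frac{\bar z_iz_j}{r^2}.
\]
So $M=(w_{i\bar j})$ has eigenvalue $a=\frac14(y'+y/r)$ in the complex radial direction and $b=y/(2r)$ with multiplicity $m-1$ on the complex tangential directions. Also $q=\partial w$ points in the complex radial direction with $|q|^2=y^2/4$. Therefore $P_{\C}(q)$ vanishes on the radial line and equals $|q|^2$ on its orthogonal complement. This gives
\[
\sigma_2(M)=(m-1)ab+\tfrac{(m-1)(m-2)}2b^2,\qquad
\tr(P_{\C}(q)M)=(m-1)\frac{y^3}{8r}.
\]

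\textbf{The ODE.} Substitute into \eqref{eq:complex-transformed-2} and multiply by $8r^2/(m-1)$. This should give
\[
s^2ryy'+cs^2y^2-sry^3=\frac{2r^2}{m-1},\qquad c=m-1>0,
\]
with $s=s(\psi(r))$. This is identical in form to \eqref{eq:real-radial-ode}; only the positive constants change. The real Hessian $D^2_{\R}w$ has eigenvalues $y'$ (radial) and $y/r$ (multiplicity $2m-1$), so it suffices to prove $y'>0$.

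\textbf{Behavior at the origin.} Insert $y=Lr+o(r)$ to get $s_0^2(1+c)L^2=2/(m-1)$. Since $y>0$ for small $r$, this forces $L=\psi''(0)>0$.

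\textbf{First-zero argument.} Suppose $r_0$ is the first zero of $y'$. There $y'(r_0)=0$ and $y''(r_0)\le0$. The equation at $r_0$ gives $cs^2y^2-sry^3>0$, i.e.\ $ry<cs$. Differentiating the ODE and evaluating at $r_0$ gives
\[
s^2ryy''=\frac{4r}{m-1}+sy^3+s'y^3(ry-2cs).
\]
Because $s'<0$ and $ry-2cs<0$, the right-hand side is positive. Hence $y''(r_0)>0$, a contradiction, so $y'>0$ on $(0,R)$ and $D^2_{\R}w>0$.

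The main obstacle is the bookkeeping in the complex Hessian step. The normalization of $u_{i\bar j}$ (the factor $\frac14$ in \eqref{eq:compression-block}) and the direction and size of $\partial w$ must be tracked carefully, since an error there would change the sign structure of the ODE. The decisive structural facts are only two: the $P_{\C}$ term contributes solely through tangential eigenvalues, and the constant $c$ is positive. Once those are confirmed, the remainder transfers directly from the real case.
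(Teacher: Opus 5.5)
Your proposal is correct and follows the paper's proof essentially step for step. The steps that match are the radial reduction, the eigenvalues $a=\frac14(y'+y/r)$ and $b=y/(2r)$, the ODE with $c=m-1$ and $d=2/(m-1)$, and the first-zero contradiction; your right-hand sides $\frac{2r^2}{m-1}$ and $\frac{4r}{m-1}$ are exactly $dr^2$ and $2dr$. The only additions are the explicit formula for $w_{i\bar j}$ and the observation that $L=\psi''(0)>0$ also gives $D^2_{\R}w>0$ at the origin.
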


\begin{proof}
By uniqueness and unitary invariance, $u$ is radial:
\[
u(z)=\phi(r),\qquad r=|z|.
\]
Complex admissibility gives
\[
\Delta_{\R}u=4\sigma_1(u_{i\bar j})>0.
\]
Therefore
\[
(r^{2m-1}\phi'(r))'>0,
\]
and since $\phi'(0)=0$,
\begin{equation}\label{eq:complex-u-radial-positive}
\phi'(r)>0,\qquad 0<r<R.
\end{equation}
Let $w(z)=\psi(r)$ and $y=\psi'$. Again $f'>0$ gives
\begin{equation}\label{eq:complex-y-positive}
y(r)>0,\qquad 0<r<R.
\end{equation}
For a radial function, the complex Hessian has one radial eigenvalue
\begin{equation}\label{eq:complex-radial-a}
a=\frac14\left(y'+\frac yr\right)
\end{equation}
and $m-1$ tangential eigenvalues
\begin{equation}\label{eq:complex-radial-b}
b=\frac{y}{2r}.
\end{equation}
Hence
\begin{equation}\label{eq:complex-radial-sigma}
\sigma_2(w_{i\bar j})
=
\frac{m-1}{8}\left(\frac{yy'}r+\frac{(m-1)y^2}{r^2}\right).
\end{equation}
Moreover $|\partial w|^2=y^2/4$, and $P_{\C}(\partial w)$ vanishes in the complex radial direction and equals $y^2/4$ in the complex tangential directions. Therefore
\begin{equation}\label{eq:complex-radial-trace}
\tr(P_{\C}(\partial w)(w_{i\bar j}))
=
\frac{m-1}{8}\frac{y^3}{r}.
\end{equation}
Substituting \eqref{eq:complex-radial-sigma} and \eqref{eq:complex-radial-trace} into \eqref{eq:complex-transformed-2} and multiplying by $8r^2/(m-1)$ gives
\begin{equation}\label{eq:complex-radial-ode}
s^2 r y y'+c s^2 y^2-s r y^3=dr^2,
\qquad
c=m-1>0,\quad d=\frac2{m-1}>0.
\end{equation}
At the origin, let
\[
L=\lim_{r\to0^+}\frac{y(r)}r,\qquad s_0=s(\psi(0))>0.
\]
The leading terms in \eqref{eq:complex-radial-ode} give
\[
s_0^2(1+c)L^2=d.
\]
Since $1+c=m$, we obtain
\begin{equation}\label{eq:L-complex}
L^2=\frac{2}{m(m-1)s_0^2}.
\end{equation}
The positivity of $y$ near the origin implies $L>0$.

Suppose $y'$ has a first zero $r_0\in(0,R)$. Then $y'(r_0)=0$ and $y''(r_0)\leq0$. Evaluating \eqref{eq:complex-radial-ode} at $r_0$ gives
\[
c s^2 y^2-s r y^3=dr^2>0,
\]
so
\begin{equation}\label{eq:ry-cs-complex}
ry<cs.
\end{equation}
Differentiating \eqref{eq:complex-radial-ode} and evaluating at $r_0$ yields
\[
2c s s'y^3+s^2 r y y''-s' r y^4-sy^3=2dr.
\]
Thus
\begin{equation}\label{eq:ypp-complex}
s^2 r y y''
=
2dr+sy^3+s'y^3(ry-2cs).
\end{equation}
Since $s'<0$ and $ry-2cs<0$, the last term is positive. Hence $s^2 r y y''>0$ and $y''(r_0)>0$, a contradiction. Therefore $y'>0$ on $(0,R)$.

The real Hessian of a radial real function on $\R^{2m}$ has eigenvalues $\psi''=y'$ in the radial direction and $\psi'/r=y/r$ in tangential real directions. Both are positive, so $D^2_{\R}w>0$ in the ball.
\end{proof}

\section{Domain Deformation and Proofs of the Main Theorems}
\subsection{Solvability,Boundary Regularity, and Uniqueness along the Deformation}
\label{subsec:solvability-uniqueness}
\paragraph{Solvability and boundary regularity.}
We first explain why the solutions used below are smooth up to the
boundary. Since the domains $\Omega_t$ form a smooth family of
uniformly strictly convex domains with uniformly controlled geometry,
one may choose smooth defining functions $\rho_t$ such that
\[
\rho_t<0\quad\hbox{in }\Omega_t,
\qquad
\rho_t=0\quad\hbox{on }\partial\Omega_t,
\]
and
\[
D^2\rho_t\geq \kappa I
\quad\hbox{on }\overline{\Omega_t}
\]
for some constant $\kappa>0$ independent of $t$.

For the Liouville equation, if $A>0$ is sufficiently large, then
\[
\Delta(A\rho_t)
=
A\Delta\rho_t
\geq 1
\geq e^{A\rho_t}
\quad\hbox{in }\Omega_t.
\]
Thus $A\rho_t$ is a subsolution, while the zero function is a
supersolution, since
\[
\Delta 0=0\leq 1=e^0.
\]
The sub- and supersolution method therefore gives a solution
$u_t$ satisfying
\[
A\rho_t\leq u_t\leq 0.
\]

For the real $\sigma_2$ equation, the strict convexity of $\rho_t$
gives
\[
D^2\rho_t\in\Gamma_2.
\]
Moreover,
\[
\sigma_2(D^2(A\rho_t))
=
A^2\sigma_2(D^2\rho_t).
\]
Hence, after increasing $A$ if necessary,
\[
\sigma_2(D^2(A\rho_t))
\geq 1
\geq e^{2A\rho_t}.
\]
Thus $A\rho_t$ is an admissible subsolution and the zero function is
a supersolution. The admissible Dirichlet theory of
Caffarelli--Nirenberg--Spruck \cite{CaffarelliNirenbergSpruck1985} therefore gives an
admissible solution.

In the complex case, real strict convexity implies that $\rho_t$ is
strictly plurisubharmonic. Indeed, by the compression formula,
\[
\bigl((\rho_t)_{i\bar j}\bigr)
=
\mathcal C(D_{\mathbb R}^2\rho_t)>0.
\]
Consequently,
\[
\sigma_2\bigl((A\rho_t)_{i\bar j}\bigr)
=
A^2\sigma_2\bigl((\rho_t)_{i\bar j}\bigr)
\geq 1
\geq e^{2A\rho_t}
\]
for $A$ sufficiently large. Hence $A\rho_t$ is a complex admissible
subsolution, while zero is a supersolution. The complex Hessian
Dirichlet theory, together with its global boundary estimates,
then gives a complex admissible solution; see, for example,
\cite{CollinsPicard2022}.

In all three cases, the right-hand side is smooth and is increasing
in $u$. The global Dirichlet estimates first give
\[
u_t\in C^{2,\alpha}(\overline{\Omega_t})
\]
for some $\alpha\in(0,1)$. Once this estimate is known, the equations
are uniformly elliptic along the admissible solution. Standard
Schauder bootstrapping then gives
\[
u_t\in C^\infty(\overline{\Omega_t}).
\]
Thus the boundary regularity asserted in
Theorems~1.1--1.3 is automatic and is not an additional regularity
assumption.
For each fixed \(t\in[0,1]\), the solution \(u_t\) used below is unique
in the corresponding admissible class. Indeed, let \(u\) and \(v\) be
two solutions on \(\Omega_t\) with the same zero boundary data. Suppose
that \(u-v\) has a positive maximum at some point
\(x_{*}\in\Omega_t\). Then
\[
u(x_{*})>v(x_{*}),
\qquad
D_{\mathbb R}^{2}u(x_{*})
\leq
D_{\mathbb R}^{2}v(x_{*}).
\]

For the Liouville equation, this gives
\[
e^{u(x_{*})}
=
\Delta u(x_{*})
\leq
\Delta v(x_{*})
=
e^{v(x_{*})},
\]
which contradicts \(u(x_{*})>v(x_{*})\).

For the real \(\sigma_{2}\) equation, the monotonicity of
\(\sigma_{2}\) on the admissible cone, recalled in Subsection~2.1,
gives
\[
e^{2u(x_{*})}
=
\sigma_{2}\bigl(D^{2}u(x_{*})\bigr)
\leq
\sigma_{2}\bigl(D^{2}v(x_{*})\bigr)
=
e^{2v(x_{*})},
\]
again a contradiction.

For the complex \(\sigma_{2}\) equation, formula~\((16)\) shows that
the compression map \(C\) preserves the semidefinite order. Hence
\[
\bigl(u_{i\bar j}(x_{*})\bigr)
=
C\bigl(D_{\mathbb R}^{2}u(x_{*})\bigr)
\leq
C\bigl(D_{\mathbb R}^{2}v(x_{*})\bigr)
=
\bigl(v_{i\bar j}(x_{*})\bigr).
\]
The monotonicity of \(\sigma_{2}\) on the complex admissible cone
therefore yields
\[
e^{2u(x_{*})}
=
\sigma_{2}\bigl(u_{i\bar j}(x_{*})\bigr)
\leq
\sigma_{2}\bigl(v_{i\bar j}(x_{*})\bigr)
=
e^{2v(x_{*})},
\]
which is impossible.

Thus \(u\leq v\). Interchanging \(u\) and \(v\) gives \(v\leq u\),
and consequently \(u=v\). Therefore, for every \(t\in[0,1]\), the
notation \(u_t\) below is unambiguous.
\medskip
\subsection{Local \texorpdfstring{$C^2$}{C2} Stability under Domain Deformations}

We first record the semilinear stability statement used for the Liouville
equation.

\begin{proposition}\label{prop:liouville-stability}
Let $\alpha\in(0,1)$, and let
$\{\Omega_t\}_{t\in[0,1]}$ be a smooth family of bounded smooth
uniformly strictly convex domains with uniformly controlled geometry.
Let $u_t$ be the unique solution of
\[
\Delta u_t=e^{u_t}
\quad\hbox{in }\Omega_t,
\qquad
u_t=0
\quad\hbox{on }\partial\Omega_t.
\]
Fix $t_0\in[0,1]$. Then there exist smooth diffeomorphisms
\[
\Phi_t:\Omega_{t_0}\longrightarrow\Omega_t,
\qquad
\Phi_{t_0}=\operatorname{id},
\]
such that
\[
u_t\circ\Phi_t
\longrightarrow
u_{t_0}
\quad\hbox{in }
C^{2,\alpha}(\overline{\Omega_{t_0}})
\quad\hbox{as }t\to t_0.
\]
Consequently, if
\[
w_t=-\operatorname{arcosh}(e^{-u_t/2}),
\]
then, for every $K\Subset\Omega_{t_0}$,
\[
w_t\circ\Phi_t
\longrightarrow
w_{t_0}
\quad\hbox{in }C^{2,\alpha}(K).
\]
\end{proposition}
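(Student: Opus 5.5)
The plan is to pull the problems back to the fixed domain $\Omega_{t_0}$ and run an implicit function argument for the resulting family of semilinear Dirichlet problems. Smoothness of the family $\{\Omega_t\}$ gives smooth diffeomorphisms $\Phi_t:\overline{\Omega_{t_0}}\to\overline{\Omega_t}$ with $\Phi_{t_0}=\mathrm{id}$. We may take, for instance, $\Phi_t=\mathrm{id}+V_t$, where $V_t$ extends the normal boundary displacement. Then $\Phi_t\to\mathrm{id}$ in $C^k(\overline{\Omega_{t_0}})$ for every $k$ as $t\to t_0$.

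Set $v_t=u_t\circ\Phi_t$. Then $v_t=0$ on $\partial\Omega_{t_0}$ and $v_t$ solves
\[
L_tv_t:=a_t^{ij}(x)\partial_{ij}v_t+b_t^i(x)\partial_iv_t=e^{v_t}
\quad\hbox{in }\Omega_{t_0},
\]
where $a_t^{ij}=\partial_k\Phi_t^{-1,i}\partial_k\Phi_t^{-1,j}\circ\Phi_t$ and $b_t$ collects the second derivatives of $\Phi_t^{-1}$. These coefficients converge to $\delta^{ij}$ and $0$ in $C^{\alpha}(\overline{\Omega_{t_0}})$. Define the map
\[
\mathcal F(t,v)=L_tv-e^v,
\qquad
\mathcal F:[0,1]\times C^{2,\alpha}_0(\overline{\Omega_{t_0}})\to C^{\alpha}(\overline{\Omega_{t_0}}).
\]
It is continuous in $t$ and $C^1$ in $v$. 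Its linearization at $(t_0,u_{t_0})$ is $\Delta-e^{u_{t_0}}$ with zero Dirichlet data. Since the zeroth-order coefficient $-e^{u_{t_0}}$ is negative, the maximum principle gives injectivity, and Schauder theory then gives an isomorphism $C^{2,\alpha}_0\to C^{\alpha}$. Here we use $u_{t_0}\in C^\infty(\overline{\Omega_{t_0}})$ from Subsection~\ref{subsec:solvability-uniqueness}.

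The implicit function theorem, in its version with continuous parameter dependence, then yields solutions $\tilde v_t$ of $\mathcal F(t,\tilde v_t)=0$ for $t$ near $t_0$, with $\tilde v_t\to u_{t_0}$ in $C^{2,\alpha}$. The main point is to identify $\tilde v_t$ with $v_t$. For $t$ near $t_0$, $\tilde v_t<0$ inside, since $u_{t_0}$ has a Hopf sign at the boundary and is negative in the interior. Hence $\tilde v_t\circ\Phi_t^{-1}$ is a solution on $\Omega_t$ with zero boundary values. The uniqueness proved in Subsection~\ref{subsec:solvability-uniqueness} gives $\tilde v_t\circ\Phi_t^{-1}=u_t$, so $v_t=\tilde v_t\to u_{t_0}$ in $C^{2,\alpha}(\overline{\Omega_{t_0}})$.

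As a fallback that avoids the implicit function theorem, one can argue by compactness. Uniform $C^{2,\alpha}$ bounds for $v_t$ follow from $A\rho_t\le u_t\le0$ together with global Schauder estimates for the pulled-back equation, whose coefficients are uniformly controlled. Arzel\`a--Ascoli then gives subsequential $C^{2,\beta}$ limits for $\beta<\alpha$. By uniqueness every such limit equals $u_{t_0}$, and interpolation recovers $C^{2,\alpha}$ convergence with a slightly smaller exponent. I expect the implicit function route to be the cleaner way to get the stated exponent.

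For the consequence, fix $K\Subset\Omega_{t_0}$. Then $u_{t_0}\le-\eta<0$ on $K$ for some $\eta>0$, and uniform convergence gives $v_t\le-\eta/2$ on $K$ for $t$ near $t_0$. The scalar map $f(q)=-\operatorname{arcosh}(e^{-q/2})$ is smooth on $[-M,-\eta/2]$, where $M$ bounds $|u_t|$. Composition with a smooth function is continuous in $C^{2,\alpha}(K)$ on this range, so $w_t\circ\Phi_t=f(v_t)\to f(u_{t_0})=w_{t_0}$ in $C^{2,\alpha}(K)$.

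The only delicate points are the identification step, which rests on uniqueness, and the fact that $f$ degenerates at $q=0$. The latter is exactly why the convergence of $w_t$ is claimed only on compact subsets.
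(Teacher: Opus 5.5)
Your proposal is correct and follows essentially the same route as the paper. You pull back to $\Omega_{t_0}$ and apply the implicit function theorem to $L_tv-e^v$, whose linearization $\Delta-e^{u_{t_0}}$ is an isomorphism $C^{2,\alpha}_0\to C^{\alpha}$ by the maximum principle and Schauder theory; you then identify the branch with $u_t\circ\Phi_t$ via the uniqueness of Subsection~\ref{subsec:solvability-uniqueness}, and compose with the arcosh transform on compact sets where $u_{t_0}$ is bounded away from $0$ (your extra negativity check and the compactness fallback are harmless but not needed).
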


\begin{proof}
Write
\[
\Omega_0:=\Omega_{t_0},
\qquad
u_0:=u_{t_0}.
\]
Since the domains form a smooth family, after restricting $t$ to a
sufficiently small neighborhood of $t_0$, there exists a smooth
family of diffeomorphisms
\[
\Phi_t:\Omega_0\longrightarrow\Omega_t,
\qquad
\Phi_{t_0}=\operatorname{id}.
\]
Let
\[
\Psi_t:=\Phi_t^{-1}.
\]
For $v\in C^{2,\alpha}(\overline{\Omega_0})$, define the pulled-back
Laplacian by
\[
\Delta_t v(x)
:=
\left.
\Delta_y(v\circ\Psi_t)(y)
\right|_{y=\Phi_t(x)}.
\]
Its coefficients depend smoothly on $(t,x)$, and
\[
\Delta_{t_0}=\Delta.
\]

Set
\[
X
:=
\left\{
v\in C^{2,\alpha}(\overline{\Omega_0}):
v=0\ \hbox{on }\partial\Omega_0
\right\},
\qquad
Y:=C^\alpha(\overline{\Omega_0}),
\]
and define
\[
\mathcal F_L(t,v)
:=
\Delta_t v-e^v.
\]
Then
\[
\mathcal F_L(t_0,u_0)=0.
\]
The linearization with respect to $v$ at $(t_0,u_0)$ is
\[
D_v\mathcal F_L(t_0,u_0)[\varphi]
=
\Delta\varphi-e^{u_0}\varphi.
\]
Denote this operator by
\[
L_L\varphi
:=
\Delta\varphi-e^{u_0}\varphi.
\]
It is uniformly elliptic, and its zeroth-order coefficient satisfies
\[
-e^{u_0}<0.
\]
The maximum principle shows that
\[
L_L\varphi=0
\quad\hbox{in }\Omega_0,
\qquad
\varphi=0
\quad\hbox{on }\partial\Omega_0
\]
has only the trivial solution. Standard Schauder theory and the
Fredholm alternative therefore imply that
\[
L_L:X\longrightarrow Y
\]
is an isomorphism.

The Banach-space implicit function theorem gives a unique smooth
local branch
\[
t\longmapsto \widetilde v_t\in X
\]
such that
\[
\widetilde v_{t_0}=u_0,
\qquad
\mathcal F_L(t,\widetilde v_t)=0.
\]
Define
\[
\widetilde u_t
:=
\widetilde v_t\circ\Psi_t.
\]
Then
\[
\Delta\widetilde u_t=e^{\widetilde u_t}
\quad\hbox{in }\Omega_t,
\qquad
\widetilde u_t=0
\quad\hbox{on }\partial\Omega_t.
\]
By uniqueness,
\[
\widetilde u_t=u_t.
\]
Consequently,
\[
u_t\circ\Phi_t
=
\widetilde v_t
\longrightarrow
u_0
\quad\hbox{in }
C^{2,\alpha}(\overline{\Omega_0}).
\]

Finally, let
\[
g(q):=-\operatorname{arcosh}(e^{-q/2}),
\qquad q<0.
\]
For every $K\Subset\Omega_0$, one has
\[
u_0\leq-\eta_K<0
\quad\hbox{on }K
\]
for some $\eta_K>0$. Hence, for $t$ sufficiently close to $t_0$,
\[
u_t\circ\Phi_t\leq-\frac{\eta_K}{2}
\quad\hbox{on }K.
\]
Since $g$ is smooth on compact subintervals of $(-\infty,0)$,
continuity of composition in H\"older spaces gives
\[
w_t\circ\Phi_t
=
g(u_t\circ\Phi_t)
\longrightarrow
g(u_0)
=
w_{t_0}
\quad\hbox{in }C^{2,\alpha}(K).
\]
\end{proof}
We next prove the corresponding input for the Hessian equations. 

\begin{proposition}
\label{prop:local-stability-hessian}
Let $\alpha\in(0,1)$, and let
$\{\Omega_t\}_{t\in[0,1]}$ be a smooth family of bounded smooth
uniformly strictly convex domains with uniformly controlled geometry.

In the real case, assume that $\Omega_t\subset\mathbb R^n$, $n\geq 3$,
and that for every $t\in[0,1]$ the Dirichlet problem
\[
\begin{cases}
\sigma_2(D^2u_t)=e^{2u_t} & \text{in }\Omega_t,\\
u_t=0 & \text{on }\partial\Omega_t
\end{cases}
\]
admits a unique smooth $\sigma_2$-admissible solution
$u_t\in C^{2,\alpha}(\overline{\Omega_t})$.

In the complex case, assume that $\Omega_t\subset\mathbb C^m$,
$m\geq 2$, and that for every $t\in[0,1]$ the Dirichlet problem
\[
\begin{cases}
\sigma_2((u_t)_{i\bar j})=e^{2u_t} & \text{in }\Omega_t,\\
u_t=0 & \text{on }\partial\Omega_t
\end{cases}
\]
admits a unique smooth complex $\sigma_2$-admissible solution
$u_t\in C^{2,\alpha}(\overline{\Omega_t})$.

Fix $t_0\in[0,1]$. Then there exist smooth diffeomorphisms
\[
\Phi_t:
\overline{\Omega_{t_0}}
\longrightarrow
\overline{\Omega_t},
\qquad
\Phi_{t_0}=\operatorname{id},
\]
such that
\[
u_t\circ\Phi_t
\longrightarrow
u_{t_0}
\quad\text{in }
C^{2,\alpha}(\overline{\Omega_{t_0}})
\quad\text{as }t\longrightarrow t_0.
\]
Consequently, if
\[
w_t=-\operatorname{arcosh}(e^{-u_t/2}),
\]
then for every $K\Subset\Omega_{t_0}$,
\[
w_t\circ\Phi_t
\longrightarrow
w_{t_0}
\quad\text{in }C^{2,\alpha}(K)
\quad\text{as }t\longrightarrow t_0.
\]
\end{proposition}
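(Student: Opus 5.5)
We follow the proof of Proposition~\ref{prop:liouville-stability}, now with a fully nonlinear operator whose linearization is elliptic only along the admissible branch. Fix $t_0$ and write $\Omega_0=\Omega_{t_0}$, $u_0=u_{t_0}$. Choose a smooth family of diffeomorphisms $\Phi_t:\overline{\Omega_0}\to\overline{\Omega_t}$ with $\Phi_{t_0}=\mathrm{id}$, for instance by flowing along a vector field that extends the normal boundary velocity. Put $\Psi_t=\Phi_t^{-1}$. For $v\in X=\{v\in C^{2,\alpha}(\overline{\Omega_0}):v=0\hbox{ on }\partial\Omega_0\}$, let $D^2_t v$ denote the pulled-back Hessian $D^2_y(v\circ\Psi_t)\circ\Phi_t$. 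In the complex case use the pulled-back compression $C(D^2_{t}v)$. Then define
\[
\mathcal F(t,v)=\sigma_2(D^2_tv)-e^{2v}
\quad\text{or}\quad
\mathcal F(t,v)=\sigma_2\bigl(C(D^2_tv)\bigr)-e^{2v},
\]
as a map from a neighborhood $U\subset X$ of $u_0$ into $Y=C^\alpha(\overline{\Omega_0})$. The coefficients of $D^2_t$ are smooth in $(t,x)$, and $\sigma_2$ is a polynomial. Hence $\mathcal F$ is $C^1$ (indeed smooth) in $(t,v)$, and $\mathcal F(t_0,u_0)=0$.

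The linearization at $(t_0,u_0)$ is
\[
L\varphi=\tr\bigl(T_1(D^2u_0)D^2\varphi\bigr)-2e^{2u_0}\varphi .
\]
In the complex case it is $\tr\bigl(T_1((u_0)_{i\bar j})C(D^2_{\R}\varphi)\bigr)-2e^{2u_0}\varphi$. Since $u_0\in C^{2,\alpha}(\overline{\Omega_0})$ is admissible up to the boundary, its eigenvalues form a compact subset of $\Gamma_2$ on $\overline{\Omega_0}$. By \eqref{eq:newton-positive}, $T_1>0$ uniformly, so $L$ is uniformly elliptic with $C^\alpha$ coefficients. In the complex case, uniform real ellipticity follows exactly as in \eqref{eq:complex-ellipticity}, using $C(\lambda\otimes\lambda)=\frac14\zeta\otimes\bar\zeta$ with $|\zeta|=|\lambda|$. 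The zeroth-order coefficient $-2e^{2u_0}$ is negative, so the maximum principle gives injectivity on $X$. Schauder theory and the Fredholm alternative then show that $L:X\to Y$ is an isomorphism. The implicit function theorem produces a $C^1$ branch $t\mapsto\tilde v_t\in X$ with $\tilde v_{t_0}=u_0$ and $\mathcal F(t,\tilde v_t)=0$.

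The main obstacle is identifying $\tilde u_t=\tilde v_t\circ\Psi_t$ with $u_t$, since uniqueness is assumed only within the admissible class. This is handled by openness of admissibility. The map $t\mapsto D^2_t\tilde v_t$ is continuous into $C^\alpha(\overline{\Omega_0})$, hence into $C^0$, and $\lambda(D^2u_0)$ lies in a compact subset of the open cone $\Gamma_2$. Therefore $\lambda(D^2\tilde u_t)\in\Gamma_2$ on $\overline{\Omega_t}$ for $t$ near $t_0$. In the complex case the same holds for $(\tilde u_t)_{i\bar j}$, by continuity of $C$. Thus $\tilde u_t$ is an admissible $C^{2,\alpha}$ solution of the Dirichlet problem. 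The comparison argument of Subsection~\ref{subsec:solvability-uniqueness}, or the assumed uniqueness, gives $\tilde u_t=u_t$, and hence $u_t\circ\Phi_t=\tilde v_t\to u_0$ in $C^{2,\alpha}(\overline{\Omega_0})$.

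The interior statement for $w_t$ follows verbatim from the last step of Proposition~\ref{prop:liouville-stability}. On $K\Subset\Omega_0$ we have $u_0\le-\eta_K<0$, so $u_t\circ\Phi_t\le-\eta_K/2$ for $t$ near $t_0$. The function $g(q)=-\arcosh(e^{-q/2})$ is smooth on $[\min u_0-1,-\eta_K/2]$. Continuity of composition with smooth functions in $C^{2,\alpha}(K)$ then gives $w_t\circ\Phi_t\to w_{t_0}$ in $C^{2,\alpha}(K)$.
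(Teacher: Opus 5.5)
Your proposal is correct and follows essentially the same route as the paper. You pull back to $\Omega_{t_0}$ via $\Phi_t$ and use that the linearization $\tr\bigl(T_1(D^2u_0)D^2\varphi\bigr)-2e^{2u_0}\varphi$ (or its compressed complex analogue) is a uniformly elliptic isomorphism $X\to Y$ to apply the implicit function theorem. Openness of $\Gamma_2$ together with uniqueness then identifies the branch with $u_t$, and composition with $g$ on compact subsets finishes the argument. Your justification that admissibility persists, via $C^0$-continuity of $t\mapsto D^2_t\tilde v_t$, is just a more explicit version of the paper's ``after decreasing $\varepsilon$'' step.
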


\begin{proof}
Fix $t_0\in[0,1]$, and write
\[
\Omega_0:=\Omega_{t_0},
\qquad
u_0:=u_{t_0}.
\]
Since the domains form a smooth family, after restricting $t$ to a
sufficiently small neighborhood of $t_0$, there exists a smooth family
of diffeomorphisms
\[
\Phi_t:\overline{\Omega_0}\longrightarrow\overline{\Omega_t},
\qquad
\Phi_{t_0}=\operatorname{id}.
\]
Let
\[
\Psi_t:=\Phi_t^{-1}.
\]

For $v\in C^{2,\alpha}(\overline{\Omega_0})$, define the pulled-back
real Hessian by
\begin{equation}
 H_t[v](x)
:=
D_y^2(v\circ\Psi_t)(y)\big|_{y=\Phi_t(x)}.
\label{eq:pullback-hessian}
\end{equation}
In local coordinates,
\begin{equation}
\begin{aligned}
\bigl(H_t[v]\bigr)_{ij}
={}&
\left(
\frac{\partial\Psi_t^\mu}{\partial y_i}
\circ\Phi_t
\right)
\left(
\frac{\partial\Psi_t^\nu}{\partial y_j}
\circ\Phi_t
\right)
v_{\mu\nu}
\\
&+
\left(
\frac{\partial^2\Psi_t^\mu}
{\partial y_i\partial y_j}
\circ\Phi_t
\right)
v_\mu .
\end{aligned}
\label{eq:pullback-hessian-coordinate}
\end{equation}
Thus $ H_t[v]$ depends linearly on $D^2v$ and $Dv$, and its
coefficients depend smoothly on $(t,x)$. Moreover,
\begin{equation}
 H_{t_0}[v]=D^2v.
\label{eq:pullback-hessian-t0}
\end{equation}

Set
\begin{equation}
X
:=
\left\{
v\in C^{2,\alpha}(\overline{\Omega_0})
:
v=0\ \text{on }\partial\Omega_0
\right\},
\qquad
Y:=C^\alpha(\overline{\Omega_0}).
\label{eq:stability-banach-spaces}
\end{equation}

We first consider the real equation. Define
\begin{equation}
\mathcal F_{\mathrm R}(t,v)
:=
\sigma_2\bigl( H_t[v]\bigr)-e^{2v}.
\label{eq:real-pulledback-operator}
\end{equation}
This is a smooth map from an open neighborhood of $(t_0,u_0)$ in
$\mathbb R\times X$ into $Y$. Since $\Gamma_2$ is open and $u_0$ is
strictly admissible up to the boundary, the neighborhood can be chosen
so that
\[
H_t[v](x)\in\Gamma_2
\qquad
\text{for every }x\in\overline{\Omega_0}.
\]

Since $\Phi_{t_0}=\operatorname{id}$, one has
\[
\mathcal F_{\mathrm R}(t_0,u_0)=0.
\]
The linearization with respect to $v$ at $(t_0,u_0)$ is
\begin{equation}
D_v\mathcal F_{\mathrm R}(t_0,u_0)[\varphi]
=
T_1(D^2u_0)^{ij}\varphi_{ij}
-
2e^{2u_0}\varphi.
\label{eq:real-stability-linearization}
\end{equation}
Denote this operator by $L_{\mathrm R}$. Since
\[
D^2u_0(x)\in\Gamma_2
\qquad
\text{for every }x\in\overline{\Omega_0},
\]
one has
\[
T_1(D^2u_0(x))>0
\qquad
\text{on }\overline{\Omega_0}.
\]
By compactness of $\overline{\Omega_0}$, there exist constants
$\lambda,\Lambda>0$ such that
\begin{equation}
\lambda|\xi|^2
\leq
T_1(D^2u_0)^{ij}\xi_i\xi_j
\leq
\Lambda|\xi|^2
\label{eq:real-stability-uniform-ellipticity}
\end{equation}
for every $\xi\in\mathbb R^n$ and every
$x\in\overline{\Omega_0}$. Hence $L_{\mathrm R}$ is uniformly
elliptic.

Moreover, its zeroth-order coefficient satisfies
\[
-2e^{2u_0}<0.
\]
The maximum principle therefore implies that
\[
L_{\mathrm R}\varphi=0
\quad\text{in }\Omega_0,
\qquad
\varphi=0
\quad\text{on }\partial\Omega_0
\]
has only the trivial solution. Standard Schauder theory for uniformly
elliptic Dirichlet problems, together with the Fredholm alternative,
then shows that
\[
L_{\mathrm R}:X\longrightarrow Y
\]
is an isomorphism.

The Banach-space implicit function theorem consequently gives
$\varepsilon>0$ and a unique smooth map
\[
t\longmapsto\widetilde v_t\in X,
\qquad
|t-t_0|<\varepsilon,
\]
such that
\begin{equation}
\widetilde v_{t_0}=u_0,
\qquad
\mathcal F_{\mathrm R}(t,\widetilde v_t)=0.
\label{eq:real-local-branch}
\end{equation}
After decreasing $\varepsilon$ if necessary,
\[
H_t[\widetilde v_t]\in\Gamma_2.
\]
Define
\[
\widetilde u_t:=\widetilde v_t\circ\Psi_t.
\]
Then $\widetilde u_t$ is an admissible solution of
\[
\sigma_2(D^2\widetilde u_t)=e^{2\widetilde u_t}
\quad\text{in }\Omega_t,
\qquad
\widetilde u_t=0
\quad\text{on }\partial\Omega_t.
\]
By the uniqueness established in
Subsection~\ref{subsec:solvability-uniqueness}, we have
\[
\widetilde u_t=u_t.
\]
Therefore
\begin{equation}
u_t\circ\Phi_t
=
\widetilde v_t
\longrightarrow
u_0
\quad\text{in }
C^{2,\alpha}(\overline{\Omega_0})
\quad\text{as }t\longrightarrow t_0.
\label{eq:real-stability-convergence}
\end{equation}

We now consider the complex equation. Using the compression map $C$
defined in Subsection~2.3, define
\begin{equation}
\mathcal F_{\mathrm C}(t,v)
:=
\sigma_2\bigl(C( H_t[v])\bigr)-e^{2v}.
\label{eq:complex-pulledback-operator}
\end{equation}
Since
\[
C( H_{t_0}[v])
=
C(D^2v)
=
(v_{i\bar j}),
\]
the linearization at $(t_0,u_0)$ is
\begin{equation}
D_v\mathcal F_{\mathrm C}(t_0,u_0)[\varphi]
=
T_1((u_0)_{i\bar j})^{i\bar j}\varphi_{i\bar j}
-
2e^{2u_0}\varphi.
\label{eq:complex-stability-linearization}
\end{equation}
Denote this operator by $L_{\mathrm C}$.

Because $u_0$ is complex $\sigma_2$-admissible,
\[
T_1((u_0)_{i\bar j})>0
\qquad
\text{on }\overline{\Omega_0}.
\]
Thus $L_{\mathrm C}$ is uniformly elliptic when regarded as a real
second-order operator. More explicitly, if
$\xi\in\mathbb R^{2m}$ and $\zeta\in\mathbb C^m$ is the corresponding
complex vector, then
\begin{equation}
D_{D^2v}\mathcal F_{\mathrm C}(t_0,u_0)
[\xi\otimes\xi]
=
\frac14\,
\zeta^*
T_1((u_0)_{i\bar j})
\zeta
\geq
\lambda|\xi|^2
\label{eq:complex-stability-symbol}
\end{equation}
for some $\lambda>0$, uniformly on
$\overline{\Omega_0}$.

The zeroth-order coefficient is again strictly negative:
\[
-2e^{2u_0}<0.
\]
Hence the maximum principle, Schauder theory, and the Fredholm
alternative show that
\[
L_{\mathrm C}:X\longrightarrow Y
\]
is an isomorphism. The implicit function theorem therefore gives a
unique local smooth branch
\[
t\longmapsto\widetilde v_t\in X
\]
satisfying
\begin{equation}
\mathcal F_{\mathrm C}(t,\widetilde v_t)=0,
\qquad
\widetilde v_{t_0}=u_0.
\label{eq:complex-local-branch}
\end{equation}
For $t$ sufficiently close to $t_0$, this branch remains complex
$\sigma_2$-admissible. Define
\[
\widetilde u_t:=\widetilde v_t\circ\Psi_t.
\]
Then $\widetilde u_t$ is an admissible solution on $\Omega_t$ and,
By the uniqueness established in
Subsection~\ref{subsec:solvability-uniqueness},
\[
\widetilde u_t=u_t.
\]
Consequently,
\begin{equation}
u_t\circ\Phi_t
\longrightarrow
u_0
\quad\text{in }
C^{2,\alpha}(\overline{\Omega_0})
\quad\text{as }t\longrightarrow t_0.
\label{eq:complex-stability-convergence}
\end{equation}

It remains to treat the transformed functions. Let
\[
g(q):=-\operatorname{arcosh}(e^{-q/2}),
\qquad q<0.
\]
Fix $K\Subset\Omega_0$. Since $u_0<0$ in $\Omega_0$, there exists
$\eta_K>0$ such that
\[
u_0\leq-2\eta_K
\qquad\text{on }K.
\]
By either \eqref{eq:real-stability-convergence} or
\eqref{eq:complex-stability-convergence}, for $t$ sufficiently close
to $t_0$,
\[
u_t\circ\Phi_t\leq-\eta_K
\qquad\text{on }K.
\]
The function $g$ is smooth with bounded derivatives on every compact
subinterval of $(-\infty,0)$. Therefore the continuity of composition
in H\"older spaces gives
\[
w_t\circ\Phi_t
=
g(u_t\circ\Phi_t)
\longrightarrow
g(u_0)
=
w_{t_0}
\quad\text{in }C^{2,\alpha}(K).
\]
This proves the proposition.
\end{proof}

\medskip
\noindent\emph{Uniformity of the boundary collar.}
Propositions~\ref{prop:liouville-stability} and~5.2 imply that, after
pulling the solutions back to a fixed domain,
\[
u_t\circ\Phi_t
\longrightarrow
u_{t_0}
\quad\hbox{in }C^2(\overline{\Omega_{t_0}}).
\]
In particular, the boundary normal derivatives and the boundary
second derivatives vary continuously with $t$. Since
\[
\min_{\partial\Omega_{t_0}}
\partial_{\nu_{t_0}}u_{t_0}>0
\]
and the domains remain uniformly strictly convex, there exist
$c_0>0$ and $\delta>0$, independent of $t$ sufficiently close to
$t_0$, such that
\[
\partial_{\nu_t}u_t\geq c_0
\quad\hbox{on }\partial\Omega_t
\]
and
\[
D^2w_t>0
\quad\hbox{whenever}\quad
0<\operatorname{dist}(x,\partial\Omega_t)<\delta.
\]
Thus the boundary collar in
Lemma~\ref{lem:boundary-strict-convexity} may be chosen uniformly
for $t$ near $t_0$.

\subsection{Proofs of the Main Theorems}

We begin with Theorem \ref{thm:laplace-main}. Choose a Euclidean ball $B_R$
and connect it to $\Omega$ by Minkowski addition,
\begin{equation}\label{eq:minkowski-laplace}
\Omega_t=(1-t)B_R+t\Omega,
\qquad 0\leq t\leq1.
\end{equation}
The support functions satisfy
\[
h_t=(1-t)R+th_\Omega,
\]
so smooth uniform strict convexity is preserved throughout the deformation.
Let $u_t$ solve \eqref{eq:laplace-main} in $\Omega_t$ and set
\[
w_t=-\arcosh(e^{-u_t/2}).
\]
Define
\[
I_L=\{t\in[0,1]:D^2w_t>0\ \hbox{in }\Omega_t\}.
\]
Lemma \ref{lem:laplace-ball} gives $0\in I_L$. If $t_0\in I_L$, boundary
strict convexity gives a uniform boundary strip on which strict convexity
persists for nearby domains, while Proposition \ref{prop:laplace-stability}
preserves strict positivity on the remaining compact interior. Thus $I_L$ is
open.

If $t_j\in I_L$ and $t_j\to t_0$, Proposition
\ref{prop:laplace-stability} gives $D^2w_{t_0}\geq0$ in $\Omega_{t_0}$.
Proposition \ref{prop:laplace-constant-rank} then shows that
$\rank D^2w_{t_0}$ is constant. Boundary strict convexity gives full rank in
a boundary strip, so the rank is full throughout $\Omega_{t_0}$. Hence
$t_0\in I_L$, and $I_L$ is closed. Therefore $I_L=[0,1]$, proving Theorem
\ref{thm:laplace-main}.

We next prove Theorem \ref{thm:real-main}. Choose a Euclidean ball $B_R$ and connect it to $\Omega$ by Minkowski addition:
\begin{equation}\label{eq:minkowski-real}
\Omega_t=(1-t)B_R+t\Omega,\qquad 0\leq t\leq1.
\end{equation}
In terms of support functions,
\[
h_t=(1-t)R+th_\Omega.
\]
Uniform strict convexity of $B_R$ and $\Omega$ implies that the curvature-radius matrices remain uniformly positive along the deformation. Thus the domains remain smooth and uniformly strictly convex.

Let $u_t$ solve \eqref{eq:real-main} in $\Omega_t$ and set
\[
w_t=-\arcosh(e^{-u_t/2}).
\]
Define
\begin{equation}\label{eq:I-real}
I_{\R}
=
\{t\in[0,1]:D^2w_t>0\hbox{ in }\Omega_t\}.
\end{equation}
By Lemma \ref{lem:real-ball}, $0\in I_{\R}$. We show openness. If $t_0\in I_{\R}$, then $D^2w_{t_0}>0$ in $\Omega_{t_0}$. Boundary strict convexity gives a boundary strip in which $D^2w_t>0$ for $t$ close to $t_0$. On the remaining compact subset, strict positivity of $D^2w_{t_0}$ and Proposition \ref{prop:local-stability-hessian} imply $D^2w_t>0$ for $t$ close to $t_0$. Hence $I_{\R}$ is open.

We show closedness. Let $t_j\in I_{\R}$ and $t_j\to t_0$. By Proposition \ref{prop:local-stability-hessian},
\[
D^2w_{t_0}\geq0
\qquad\hbox{in }\Omega_{t_0}.
\]
The transformed admissibility condition follows from
\[
D^2u_{t_0}
=
h'(w_{t_0})
\left(
D^2w_{t_0}
-\frac1{s(w_{t_0})}Dw_{t_0}\otimes Dw_{t_0}
\right)
\]
and $h'>0$. Therefore Proposition \ref{prop:real-constant-rank} implies that $\rank D^2w_{t_0}$ is constant. Boundary strict convexity gives $D^2w_{t_0}>0$ in a boundary strip, so the constant rank is full. Hence $D^2w_{t_0}>0$ everywhere and $t_0\in I_{\R}$. Thus $I_{\R}$ is closed. Since $[0,1]$ is connected, $I_{\R}=[0,1]$, proving Theorem \ref{thm:real-main}.

The complex proof follows the same deformation argument, with the complex ball lemma and the complex constant-rank theorem replacing their real counterparts. Connect a ball in $\C^m\simeq\R^{2m}$ to $\Omega$ by the real Minkowski deformation
\[
\Omega_t=(1-t)B_R+t\Omega.
\]
Real uniform strict convexity is preserved by the support-function interpolation. Let $u_t$ solve \eqref{eq:complex-main} and set
\[
w_t=-\arcosh(e^{-u_t/2}).
\]
Define
\[
I_{\C}
=
\{t\in[0,1]:D^2_{\R}w_t>0\hbox{ in }\Omega_t\}.
\]
Lemma \ref{lem:complex-ball} gives $0\in I_{\C}$. Openness follows from boundary strict convexity and Proposition \ref{prop:local-stability-hessian}. For closedness, if $t_j\in I_{\C}$ and $t_j\to t_0$, then
\[
D^2_{\R}w_{t_0}\geq0.
\]
The transformed admissibility condition follows from
\[
(u_{t_0})_{i\bar j}
=
h'(w_{t_0})
\left(
(w_{t_0})_{i\bar j}
-\frac1{s(w_{t_0})}(w_{t_0})_i(w_{t_0})_{\bar j}
\right).
\]
Proposition \ref{prop:complex-constant-rank} gives constant rank of $D^2_{\R}w_{t_0}$. Since boundary strict convexity gives full rank in a boundary strip, the rank is full everywhere. Thus $I_{\C}$ is closed and $I_{\C}=[0,1]$. This proves Theorem \ref{thm:complex-main}.

\begin{remark}
The local $C^2$ stability used above is not a consequence of the constant-rank theorem. It is an independent compactness input. The exponential equation is simpler than the eigenvalue equation in one respect: it has no scaling invariance, and the local quadratic barrier in Proposition \ref{prop:laplace-stability} rules out collapse of subsequential limits to the zero function. This is the point at which the exponential right-hand side is used essentially.
\end{remark}

\small
\bibliographystyle{alpha}
\bibliography{reference}

@article{AlvarezLasryLions1997,
  author  = {Alvarez, O. and Lasry, J.-M. and Lions, P.-L.},
  title   = {Convex viscosity solutions and state constraints},
  journal = {J. Math. Pures Appl.},
  volume  = {76},
  year    = {1997},
  pages   = {265--288}
}

@incollection{BadianeZeriahi2023,
  author    = {Badiane, Papa and Zeriahi, Ahmed},
  title     = {A variational approach to the eigenvalue problem for complex {Hessian} operators},
  booktitle = {Nonlinear Analysis, Geometry and Applications},
  editor    = {Seck, Diaraf and Kangni, Kinvi and Sambou, Marie Salomon and Nang, Philibert and Fall, Mouhamed Moustapha},
  series    = {Trends in Mathematics},
  publisher = {Birkh{\"a}user},
  address   = {Cham},
  year      = {2024},
  pages     = {227--256},
  doi       = {10.1007/978-3-031-52681-7_10}
}

@article{BianGuan2009,
  author  = {Bian, B. and Guan, P.},
  title   = {A microscopic convexity principle for nonlinear partial differential equations},
  journal = {Invent. Math.},
  volume  = {177},
  year    = {2009},
  pages   = {307--335}
}

@article{BianGuan2010,
  author  = {Bian, B. and Guan, P.},
  title   = {A structural condition for microscopic convexity principle},
  journal = {Discrete Contin. Dyn. Syst.},
  volume  = {28},
  number  = {2},
  year    = {2010},
  pages   = {789--807}
}

@article{BauschkeGulerLewisSendov2001,
  author  = {Bauschke, H. H. and Guler, O. and Lewis, A. S. and Sendov, H. S.},
  title   = {Hyperbolic polynomials and convex analysis},
  journal = {Canad. J. Math.},
  volume  = {53},
  year    = {2001},
  pages   = {470--488}
}

@article{BrascampLieb1976,
  author  = {Brascamp, H. J. and Lieb, E. H.},
  title   = {On extensions of the {Brunn--Minkowski} and {Pr{\'e}kopa--Leindler} theorems},
  journal = {J. Funct. Anal.},
  volume  = {22},
  year    = {1976},
  pages   = {366--389}
}

@article{CaffarelliFriedman1985,
  author  = {Caffarelli, L. and Friedman, A.},
  title   = {Convexity of solutions of some semilinear elliptic equations},
  journal = {Duke Math. J.},
  volume  = {52},
  year    = {1985},
  pages   = {431--455}
}

@article{CaffarelliNirenbergSpruck1985,
  author  = {Caffarelli, L. and Nirenberg, L. and Spruck, J.},
  title   = {The {Dirichlet} problem for nonlinear second-order elliptic equations {III}: Functions of the eigenvalues of the {Hessian}},
  journal = {Acta Math.},
  volume  = {155},
  year    = {1985},
  pages   = {261--301}
}

@article{ChuLiuMcCleerey2024,
  author  = {Chu, Jianchun and Liu, Yaxiong and McCleerey, Nicholas},
  title   = {The eigenvalue problem for the complex {Hessian} operator on {$m$}-pseudoconvex manifolds},
  journal = {J. Funct. Anal.},
  volume  = {290},
  number  = {3},
  year    = {2026},
  pages   = {111258},
  doi     = {10.1016/j.jfa.2025.111258}
}

@article{Colesanti2005,
  author  = {Colesanti, A.},
  title   = {Brunn--Minkowski inequalities for variational functionals and related problems},
  journal = {Adv. Math.},
  volume  = {194},
  year    = {2005},
  pages   = {105--140}
}

@article{CollinsPicard2022,
  author  = {Collins, T. C. and Picard, S.},
  title   = {The {Dirichlet} problem for the {$k$}-{Hessian} equation on a complex manifold},
  journal = {Amer. J. Math.},
  volume  = {144},
  year    = {2022},
  pages   = {1641--1680}
}

@article{DinewKolodziej2011,
  author  = {Dinew, S{\l}awomir and Ko{\l}odziej, S{\l}awomir},
  title   = {A priori estimates for complex {Hessian} equations},
  journal = {Anal. PDE},
  volume  = {7},
  number  = {1},
  year    = {2014},
  pages   = {227--244},
  doi     = {10.2140/apde.2014.7.227}
}

@article{DinewKolodziej2012,
  author  = {Dinew, S{\l}awomir and Ko{\l}odziej, S{\l}awomir},
  title   = {Liouville and {Calabi--Yau} type theorems for complex {Hessian} equations},
  journal = {Amer. J. Math.},
  volume  = {139},
  number  = {2},
  year    = {2017},
  pages   = {403--415},
  doi     = {10.1353/ajm.2017.0009}
}

@article{Garding1959,
  author  = {G{\aa}rding, L.},
  title   = {An inequality for hyperbolic polynomials},
  journal = {J. Math. Mech.},
  volume  = {8},
  year    = {1959},
  pages   = {957--965}
}

@article{GuanMa2003,
  author  = {Guan, P. and Ma, X.-N.},
  title   = {The {Christoffel--Minkowski} problem {I}: Convexity of solutions of a {Hessian} equation},
  journal = {Invent. Math.},
  volume  = {151},
  year    = {2003},
  pages   = {553--577}
}

@article{HouMaWu2010,
  author  = {Hou, Z. and Ma, X.-N. and Wu, D.},
  title   = {A second order estimate for complex {Hessian} equations on a compact {K{\"a}hler} manifold},
  journal = {Math. Res. Lett.},
  volume  = {17},
  year    = {2010},
  pages   = {547--561}
}

@article{Jerison1996,
  author  = {Jerison, D.},
  title   = {The direct method in the calculus of variations for convex bodies},
  journal = {Adv. Math.},
  volume  = {122},
  year    = {1996},
  pages   = {262--279}
}

@article{Korevaar1983,
  author  = {Korevaar, N.},
  title   = {Convex solutions to nonlinear elliptic and parabolic boundary value problems},
  journal = {Indiana Univ. Math. J.},
  volume  = {32},
  year    = {1983},
  pages   = {603--614}
}

@article{KorevaarLewis1987,
  author  = {Korevaar, N. and Lewis, J.},
  title   = {Convex solutions of certain elliptic equations have constant rank {Hessians}},
  journal = {Arch. Rational Mech. Anal.},
  volume  = {97},
  year    = {1987},
  pages   = {19--32}
}

@article{LiuMaXu2010,
  author  = {Liu, P. and Ma, X.-N. and Xu, L.},
  title   = {A {Brunn--Minkowski} inequality for the {Hessian} eigenvalue in three-dimensional convex domain},
  journal = {Adv. Math.},
  volume  = {225},
  year    = {2010},
  pages   = {1616--1633}
}

@article{MaXu2008,
  author  = {Ma, X.-N. and Xu, L.},
  title   = {The convexity of solution of a class {Hessian} equation in bounded convex domain in {$\mathbb R^3$}},
  journal = {J. Funct. Anal.},
  volume  = {255},
  year    = {2008},
  pages   = {1713--1723}
}

@article{Renegar2006,
  author  = {Renegar, J.},
  title   = {Hyperbolic programs, and their derivative relaxations},
  journal = {Found. Comput. Math.},
  volume  = {6},
  year    = {2006},
  pages   = {59--79}
}

@article{Salani2005,
  author  = {Salani, P.},
  title   = {A {Brunn--Minkowski} inequality for the {Monge--Amp{\`e}re} eigenvalue},
  journal = {Adv. Math.},
  volume  = {194},
  year    = {2005},
  pages   = {67--86}
}

@article{Salani2012,
  author  = {Salani, P.},
  title   = {Convexity of solutions and {Brunn--Minkowski} inequalities for {Hessian} equations in {$\mathbb R^3$}},
  journal = {Adv. Math.},
  volume  = {229},
  year    = {2012},
  pages   = {1924--1948}
}

@book{Schneider1993,
  author    = {Schneider, R.},
  title     = {Convex Bodies: The {Brunn--Minkowski} Theory},
  publisher = {Cambridge University Press},
  address   = {Cambridge},
  year      = {1993}
}

@article{Wang1994,
  author  = {Wang, X.-J.},
  title   = {A class of fully nonlinear elliptic equations and related functionals},
  journal = {Indiana Univ. Math. J.},
  volume  = {43},
  year    = {1994},
  pages   = {25--54}
}

@article{limasa,
  author  = {Li, Jiahuan and Ma, Xi-Nan and Salani, Paolo},
  title   = {A {Brunn--Minkowski} inequality for the {Hessian} eigenvalue in convex domain},
  journal = {arXiv preprint},
  year    = {2026},
  note    = {arXiv:2606.22847}
}

@article{chenlima,
  author  = {Chen, Chuanqiang and Li, Jiahuan and Ma, Xi-Nan},
  title   = {{Brunn--Minkowski} inequality for the first complex {$\sigma_2$}-{Hessian} eigenvalue},
  journal = {arXiv preprint},
  year    = {2026},
  note    = {arXiv:2606.25678}
}

@article{ML71,
  author  = {Makar-Limanov, L. G.},
  title   = {Solution of the Dirichlet problem for the equation
             {$\Delta u=-1$} in a convex region},
  journal = {Math. Notes Acad. Sci. USSR},
  volume  = {9},
  year    = {1971},
  pages   = {52--53},
  doi     = {10.1007/BF01405053}
}

@article{Ken85,
  author  = {Kennington, Alan U.},
  title   = {Power concavity and boundary value problems},
  journal = {Indiana Univ. Math. J.},
  volume  = {34},
  number  = {3},
  year    = {1985},
  pages   = {687--704},
  doi     = {10.1512/iumj.1985.34.34036}
}

@article{CaffarelliGuanMa2007,
  author  = {Caffarelli, Luis A. and Guan, Pengfei and Ma, Xi-Nan},
  title   = {A constant rank theorem for solutions of fully nonlinear
             elliptic equations},
  journal = {Comm. Pure Appl. Math.},
  volume  = {60},
  number  = {12},
  year    = {2007},
  pages   = {1769--1791},
  doi     = {10.1002/cpa.20197}
}

\medskip
\noindent
(Jiahuan Li) Department of Mathematics, University of Science and Technology of China, Hefei, 230026, Anhui Province, China.\;
Email address: jiahuan@mail.ustc.edu.cn

\medskip
\noindent
(Shuning Xu) Department of Mathematics, University of Science and Technology of China, Hefei, 230026, Anhui Province, China.\;
Email address: xushuning@mail.ustc.edu.cn
\end{document}